\newtheorem{theorem}{Theorem}[section]
\newtheorem{definition}{Definition}[section]
\newtheorem{lemma}{Lemma}[section]
\newtheorem{corollary}{Corollary}[section]
\newtheorem{remark}{Remark}[section]
\numberwithin{equation}{section}
\begin{document}

\title[Asymptotic expansion for the solution of  a convection-diffusion problem]
{Asymptotic expansion for the solution of  a convection-diffusion problem in a thin graph-like junction
}
\author[T.A.~Mel'nyk and A.V. Klevtsovskiy]{ Taras A. Mel'nyk and Arsen V. Klevtsovskiy}
\address{\hskip-12pt  Faculty of Mathematics and Mechanics, Department of Mathematical Physics\\
Taras Shevchenko National University of Kyiv\\
Volodymyrska str. 64,\ 01601 Kyiv,  \ Ukraine
}
\email{melnyk@imath.kiev.ua, \ \ avklevtsovskiy@gmail.com}

\begin{abstract}
A steady-state  convection-diffusion problem with a small diffusion of order $\mathcal{O}(\varepsilon)$
 is considered  in a thin three-dimensional graph-like junction consisting  of  thin  cylinders connected through a domain (node) of diameter $\mathcal{O}(\varepsilon),$ where $\varepsilon$ is a small parameter.
 Using multiscale analysis, the asymptotic expansion  for the solution is constructed and justified. The asymptotic estimates in the norm of Sobolev space $H^1$ as well as  in the uniform norm are proved for the difference between the solution and proposed approximations  with a predetermined accuracy with respect to the degree of $\varepsilon$.
\end{abstract}

\keywords{Asymptotic expansion,  convection-diffusion problem,   asymptotic estimate, thin star-shaped junction.
\\
\hspace*{9pt} {\it MOS subject classification:} \  76R99, 35J25, 35B40, 35B25, 74K30
}

\maketitle
\tableofcontents

\section{Introduction}

{\bf 1.} Convection-diffusion problems have recently attracted a lot of attention in the research literature (see e.g. recent monograph \cite{Stynes-2018} and references there). In  \cite{Mor96}  many applied problems, where convection-diffusion equations involve, are indicated; these are both the drift-diffusion equations for modeling semiconductor devices and the Black - Scholes equation from financial modeling. Here one can also mention heat transfer and chemical concentration models, image processing in computer graphics and medical applications, simulations of multiphase flows and  material crystallizations involving convection-diffusion equations. In  \cite{Mor96} it also noted that \textit{"accurate modeling of the interaction between convective and diffusion processes is the most common and difficult problem in the numerical approximation of partial differential equations."}

In general form a steady-state convection-diffusion equation is as follows:
$$
-  \mathrm{div} \big( D\, \nabla u\big) +  \mathrm{div} \big( \overrightarrow{V} u \big) = f,
$$
where $u$  is the concentration for mass transfer, $D$ is the  diffusion matrix,
$\overrightarrow{V}= (v_1, v_2, v_3)$ is the velocity field that the quantity is moving with, $f$ describes sources of the quantity $u,$
$\nabla u$ is the gradient of $u.$
If the velocity field is described as an incompressible flow, i.e., $\mathrm{div}\overrightarrow{V} = \sum_{i=1}^{3} \frac{\partial v_i}{\partial x_i}= 0,$ then the equation is simplified:  $-  \mathrm{div} \big( D\, \nabla u\big) +  \overrightarrow{V}\cdot  \nabla u = f.$

In the case when the P\'eclet number is large, that is, when diffusion is very small, then there is  a small parameter $\varepsilon$ near the diffusion terms
$$
 - \varepsilon\,  \mathrm{div} \big( D\, \nabla u\big) +  \mathrm{div} \big( \overrightarrow{V} u \big) = f
$$
and the convection terms  have a dominant  and strong influence on the solution  when  $\varepsilon$ is close to zero.
Obviously, that the thorough justification both numerical approximations and asymptotic approximations for solutions to such convection-dominated problems is more hard task. The reason is that the corresponding solutions have boundary layers, i.e., there are narrow areas where solutions are bounded regardless of $\varepsilon$, but where their derivatives are large \cite[\S 1.1]{Stynes-2018}.

In the majority of works devoted to error estimates for numerical approximations of convection-dominated problems with the Dirichlet boundary conditions, global estimates are proved in the weighted energy norm $\big(\varepsilon \|\nabla u\|^2_{L^2(\Omega)} + \| u\|^2_{L^2(\Omega)}\big)^{1/2}$ (see e.g. \cite[\S 4.2.1]{Stynes-2018}, \cite{Verfuerth_2005}).
But, as was noted in \cite{Jorn_Knob_Novo_2018} these bounds are not robust and
in \cite[\S 4.2]{Stynes-2018} one points out that \textit{"It is in general difficult to derive sharp bounds on derivatives of solutions of convection-diffusion problems inside characteristic boundary and interior layers."}  Error estimates for convection-dominated problems with mixed boundary conditions are missing.

\medskip

{\bf 2.} Our  work deals with the construction of the complete asymptotic expansion for the solution to a
steady-state  convection-dominated  problem in a thin three-dimensional graph-like junction consisting  of three  thin  cylinders connected through a domain (node) of diameter $\mathcal{O}(\varepsilon).$ In addition, we consider the Dirichlet boundary conditions on the bases
of the thin cylinders and the perturbed Neumann type boundary condition
$$
\big(-  \varepsilon \,  D\nabla u +  u \, \overrightarrow{V}\big) \cdot \boldsymbol{\nu}   =  \varepsilon \varphi
$$
on the lateral surfaces, in which the flows of both the convective and diffusion parts are involved.
 The  aim  is to gain a fundamental understanding of how surface interactions  influence transport processes in different  media, including porous ones (a material consisted of solid and voids (cavities, channels or interstices)).

To construct the asymptotic expansion  for the solution, we use the approach developed in our papers \cite{Klev_2019,Mel_Klev_AA-2016,M-AA-2021}. Here we have adapted this method to non-self-adjoint boundary value problems with a small parameter at the highest derivatives. The expansion consists of 3 parts, namely, the regular part of the
asymptotics located inside of each thin cylinder,  the inner part of the asymptotics discovered in a neighborhood of the
node, and the boundary-layer asymptotics near the base of one thin cylinder. The principal new feature and novelty  of this paper in comparison with the papers mentioned above are the following two points.

The terms of the inner part of the asymptotics are special solutions of boundary-value problems in an unbounded
domain with different outlets at infinity. And if in the previous problems considered in  \cite{Klev_2019,Mel_Klev_AA-2016,M-AA-2021} such solutions had  polynomial growth at infinity, now they should stabilize at each outlet to some constant. The necessary and sufficient conditions for the existence of such solutions are proved in \S~\ref{subsec_Inner_part}, moreover, it is shown that this stabilization is exponential.

Secondly, an a priori estimate for the solution of this convection-dominated problem in the usual energy norm of the Sobolev space $H^1$ is proved in Theorem~\ref{apriory_estimate} under additional reasonable assumptions for  the velocity field  $\overrightarrow{V}.$ With the help of this estimate we justify the constructed asymptotics and prove estimates for the difference between the solution of the original problem and asymptotic approximations with a predetermined accuracy with respect to the degree of $\varepsilon.$

It should be stressed that the error estimates are very important both for justification of adequacy of one-dimensional models that aim at description of actual three-dimensional thin bodies and for the study of boundary effects and effects of local (internal) inhomogeneities in applied problems. We also hope that the a priori estimate proved here will help to construct robust numerical approximations for convection-dominated problems.

\medskip

{\bf 3.} Investigations of various physical and biological processes in thin channels, junctions, and networks are urgent for numerous fields of natural sciences (see e.g. \cite{Pan_2005,Post-2012}). We refer the reader to \cite{Mel_Klev_AA-2016} for the review of different asymptotic approaches to study boundary-value problems in thin graph-like domains. Here we mention some papers  connected to convection-diffusion problems in thin tubular structures.

An asymptotic expansion is constructed with a draft justification for the solution to a convection-diffusion equation
$$
 - \mathrm{div} \big( K\, \nabla u\big) +   \overrightarrow{V} \cdot \nabla u  = g,
$$
with  very specific structures of  the diffusion  and the  velocity field in a $2D$-thin tubular structure in \cite{Car_Pan_Sir_2010}; the Neumann  condition, which involved only flux of the diffusion part, is  posed  on the lateral boundary and  inflow and outflow concentrations are given at the ends of thin bars.

In  \cite{Pan_Pan_Pia_2010} the leading terms of the asymptotics are constructed for the solution to a stationary convection-diffusion equation defined in a thin cylinder that is the union of two nonintersecting cylinders with a junction at the origin.
In each of these cylinders the coefficients are rapidly oscillating functions that are periodic in the axial direction, and the microstructure period is of the same order as the cylinder diameter. On the lateral boundary of the cylinder the Neumann boundary condition, which involved only flux of the diffusion part, is considered, while the Dirichlet boundary conditions are posed at the cylinder bases.

Convection-diffusion problems can be obtained from linearization Navier-Stokes equations with a large Reynolds number.
The method of the partial asymptotic domain decomposition was applied to study  the Navier-Stokes flow in thin  tube structures in \cite{P-P-Stokes-1-2015,P-P-Stokes-2-2015}.  Asymptotic analysis  of the steady incompressible flow of a Bingham fluid in  a thin T-like shaped plane domain, under the action of given external forces and with no-slip boundary condition on the whole boundary of the domain,  is made in \cite{Bun_Gau_Leo_2019}.

\medskip

{\bf 4.} After the statement of the problem in Sec. ~\ref{Sec:Statement}, the paper is organized as follows.  Section~\ref{Sec:expansions} deals with the construction formal asymptotic expansions for the solution to the problem \eqref{probl} in different parts of the thin graph-like junction.  In particular, as was noted above, we introduce a special inner asymptotic expansion in a neighborhood of the node, determine its coefficients and study some their properties as solutions to corresponding boundary-value problems in an unbounded
domain in \S~\ref{subsec_Inner_part}.  Writing down the solvability conditions for such problems, Kirchhoff conditions are derived at the vertex of the graph for terms of the regular expansion, in particular, the limit problem on the graph is obtained in \S~\ref{sub_limit_problem}.  In Sec.~\ref{Sec:justification}, we construct a complete asymptotic expansion in the whole thin graph-like junction and  prove the corresponding  asymptotic estimates.  In Conclusion the obtained results are analyzed and  research perspectives are considered.

\section{Statement of the problem}\label{Sec:Statement}
The model thin graph-like junction  $\Omega_\varepsilon$  consists of three thin  cylinders
$$
\Omega_\varepsilon^{(i)} =
  \Bigg\{
  x=(x_1, x_2, x_3)\in\Bbb{R}^3 \ : \
  \varepsilon \ell_0<x_i<\ell_i, \quad
  \sum\limits_{j=1}^3 (1-\delta_{ij})x_j^2<\varepsilon^2 h_i^2
  \Bigg\}, \quad i \in \{1, 2, 3\},
$$
that are joined through a domain $\Omega_\varepsilon^{(0)}$ (referred in the sequel "node").
Here $\varepsilon$ is a small parameter; $\ell_0\in(0, \frac13), \ \ell_i\geq1, \ i \in \{1, 2, 3\};$
 the constant $\{h_i\}_{i=1}^3$ are positive;
the symbol $\delta_{ij}$ is the Kroneker delta, i.e.,
$\delta_{ii} = 1$ and $\delta_{ij} = 0$ if $i \neq j.$

Denote the lateral surface of the thin cylinder $\Omega_\varepsilon^{(i)}$ by
$$
{\Gamma_\varepsilon^{(i)}} := \partial\Omega_\varepsilon^{(i)} \cap \{ x\in\Bbb{R}^3 \ : \ \varepsilon \ell_0<x_i<\ell_i \}
$$
and by
$$
\Upsilon_\varepsilon^{(i)} (y_i) := \Omega_\varepsilon^{(i)} \cap
\big\{  x\in\Bbb{R}^3 \, : \ x_i= y_i\big\}
$$
its cross-section at the point $y_i \in [ \varepsilon \ell_0, \ell_i].$

The node $\Omega_\varepsilon^{(0)}$ (see Fig.~\ref{f2}) is formed by the homothetic transformation with coefficient $\varepsilon$ from a bounded domain $\Xi^{(0)}\subset \Bbb R^3$,  i.e.,
$
\Omega_\varepsilon^{(0)} = \varepsilon\, \Xi^{(0)}.
$
In addition, we assume that its boundary contains the disks
$\Upsilon_\varepsilon^{(i)} (\varepsilon\ell_0), \ i \in \{1, 2, 3\},$
and denote
$
\Gamma_\varepsilon^{(0)} :=
\partial\Omega_\varepsilon^{(0)} \backslash
\left\{
 \overline{\Upsilon_\varepsilon^{(1)} (\varepsilon \ell_0)} \cup
 \overline{\Upsilon_\varepsilon^{(2)} (\varepsilon \ell_0)} \cup
 \overline{\Upsilon_\varepsilon^{(3)} (\varepsilon \ell_0)}
\right\}.
$

\begin{figure}[htbp]
\begin{center}
\includegraphics[width=5cm]{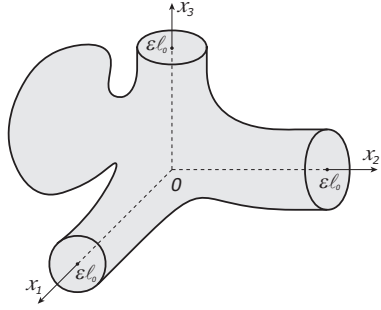}
\end{center}
\caption{The node $\Omega_\varepsilon^{(0)}$}\label{f2}
\end{figure}

Thus the model thin graph-like junction  $\Omega_\varepsilon$  (see Fig.~\ref{f3})
is   the interior of the union
$
\bigcup_{i=0}^{3}\overline{\Omega_\varepsilon^{(i)}}
$
and we assume that the surface $\partial\Omega_\varepsilon\setminus \bigcup_{i=0}^{3}\overline{\Upsilon_\varepsilon^{(i)} (\ell_i)}$ is smooth.

\begin{figure}[htbp]
\begin{center}
\includegraphics[width=8cm]{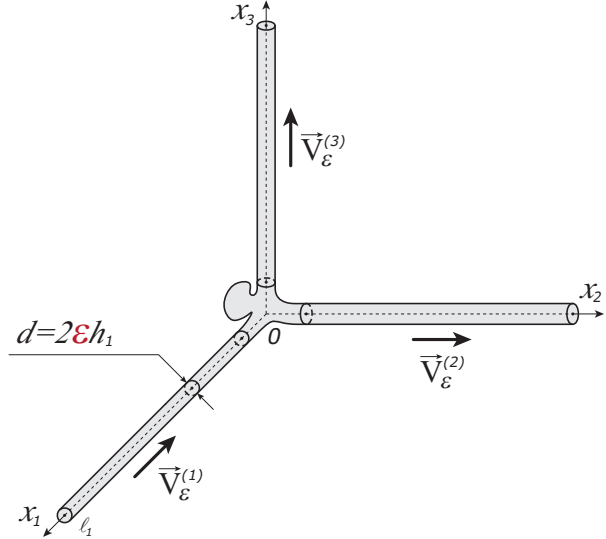}
\end{center}
\caption{The model thin graph-like  junction $\Omega_\varepsilon$}\label{f3}
\end{figure}

\begin{remark}
We can consider more general thin graph-like junctions with arbitrary orientation of thin cylinders and their number can be also arbitrary (see \cite{Klev_2019}).
 But to avoid technical and huge calculations and to demonstrate the main steps of the proposed asymptotic approach
 we consider the model thin   junction $\Omega_\varepsilon,$ in which the cylinders are placed along the coordinate axes.
\end{remark}

The given vector-valued function $\overrightarrow{V_\varepsilon}$ depends on the parts of $\Omega_\varepsilon$ and has the following structure:
\begin{gather}
\overrightarrow{V_\varepsilon}(x)=
\left(v^{(0)}_1(\tfrac{x}{\varepsilon}),  \  v^{(0)}_2(\tfrac{x}{\varepsilon}), \  v^{(0)}_3(\tfrac{x}{\varepsilon})
\right) =: \overrightarrow{V_\varepsilon}^{(0)}(x), \qquad x \in \ \Omega_\varepsilon^{(0)}, \notag
   \\
   \label{str_1}
\overrightarrow{V_\varepsilon}(x)=
\left(v^{(1)}_1(x_1), \ \varepsilon\,  v^{(1)}_2(x_1, \tfrac{\overline{x}_1}{\varepsilon}), \
 \varepsilon \, v^{(1)}_3(x_1,\tfrac{\overline{x}_1}{\varepsilon})
\right) =: \overrightarrow{V_\varepsilon}^{(1)}(x), \qquad x \in \ \Omega_\varepsilon^{(1)} \quad (v^{(1)}_1 < 0),
    \\
    \label{str_2}
\overrightarrow{V_\varepsilon}(x) =
\left( \varepsilon\,  v^{(2)}_1(x_2, \tfrac{\overline{x}_2}{\varepsilon}), \   v^{(2)}_2(x_2),  \
 \varepsilon \, v^{(2)}_3(x_2,\tfrac{\overline{x}_2}{\varepsilon})
\right) =: \overrightarrow{V_\varepsilon}^{(2)}(x), \qquad x \in \ \Omega_\varepsilon^{(2)} \quad (v^{(2)}_2 >  0),
     \\
     \label{st_3}
\overrightarrow{V_\varepsilon}(x) =
\left(\varepsilon\,  v^{(3)}_1(x_3, \tfrac{\overline{x}_3}{\varepsilon}), \
    \varepsilon \, v^{(3)}_2(x_3,\tfrac{\overline{x}_3}{\varepsilon}), \ v^{(3)}_3(x_3)
\right) =: \overrightarrow{V_\varepsilon}^{(3)}(x), \qquad x \in \ \Omega_\varepsilon^{(3)} \quad (v^{(3)}_3 >  0).
  \end{gather}
Here
$$
\overline{x}_i =
\left\{\begin{array}{lr}
(x_2, x_3), & i=1, \\
(x_1, x_3), & i=2, \\
(x_1, x_2), & i=3.
\end{array}\right.
$$
All components of the vector-valued function $\overrightarrow{V_\varepsilon}$ are smooth and bounded in $\overline{\Omega}_\varepsilon.$
In addition, the functions $v^{(1)}_1, v^{(2)}_2, v^{(3)}_3$ are equal to constants $\mathrm{v}_1, \mathrm{v}_2, \mathrm{v}_3$ in
 neighbourhoods  of the origin, respectively; the other components of the vector-functions $\overrightarrow{V_\varepsilon}^{(1)},$ $\overrightarrow{V_\varepsilon}^{(2)},$ $\overrightarrow{V_\varepsilon}^{(3)}$  have compact supports
with respect to the corresponding longitudinal  variable $x_i$ in $\Omega^{(i)}_\varepsilon$ $(i\in\{1,2,3\});$ and
\begin{gather}
\overrightarrow{V_\varepsilon}^{(0)}\big|_{x_1=\varepsilon \, \ell_0-0} = \overrightarrow{V_\varepsilon}^{(1)}\big|_{x_1=\varepsilon \, \ell_0+0} = \left(
\mathrm{v}_1, \, 0,\, 0\right) \ \  \text{on} \ \Upsilon_\varepsilon^{(1)} (\varepsilon\ell_0), \notag
\\
\overrightarrow{V_\varepsilon}^{(0)}\big|_{x_2=\varepsilon \, \ell_0-0} = \overrightarrow{V_\varepsilon}^{(2)}\big|_{x_2=\varepsilon \, \ell_0+0} = \left(0,\, \mathrm{v}_2, \, 0\right) \ \  \text{on} \ \Upsilon_\varepsilon^{(2)} (\varepsilon\ell_0), \notag
 \\ \label{v_i}
\overrightarrow{V_\varepsilon}^{(0)}\big|_{x_3=\varepsilon \, \ell_0-0} = \overrightarrow{V_\varepsilon}^{(3)}\big|_{x_3=\varepsilon \, \ell_0+0} =\left(
0,\, 0, \, \mathrm{v}_3\right) \ \  \text{on} \ \Upsilon_\varepsilon^{(3)} (\varepsilon\ell_0).
\end{gather}

These assumptions are natural and mean than the velocity field $\overrightarrow{V_\varepsilon}$ enters in the thin cylinder
$\Omega_\varepsilon^{(1)}$ and outgoes from the other cylinders $\Omega_\varepsilon^{(2)}$ and $\Omega_\varepsilon^{(3)}.$


In $\Omega_\varepsilon,$ we consider the following the convection-diffusion  problem:
\begin{equation}\label{probl}
\left\{\begin{array}{rcll}
  -  \varepsilon\, \mathrm{div} \big( \mathbb{D}^{(i)}_\varepsilon(x) \nabla u_\varepsilon\big) +
  \mathrm{div} \big( \overrightarrow{V_\varepsilon}^{(i)}(x) \, u_\varepsilon\big)
  & = & 0, &
    x \in\Omega_\varepsilon^{(i)}, \ \   i\in\{1,2,3\},
\\[2mm]
   -  \varepsilon \, \sigma_\varepsilon(u_\varepsilon)  + \,  u_\varepsilon\,  \overrightarrow{V_\varepsilon}^{(i)} \cdot \boldsymbol{\nu}
  & = & \varepsilon\, \varphi^{(i)}(x_i) &
   \text{on} \ \Gamma_\varepsilon^{(i)}, \ \   i\in\{1,2,3\},
\\[2mm]
-  \varepsilon\, \mathrm{div} \big( \mathbb{D}^{(0)}_\varepsilon(x) \nabla u_\varepsilon\big) +
  \mathrm{div} \big(\overrightarrow{V_\varepsilon}^{(0)}(x) \,  u_\varepsilon\big)  & = & 0, &
    x \in\Omega_\varepsilon^{(0)},
\\[2mm]
 -  \varepsilon \, \sigma_\varepsilon(u_\varepsilon) +  u_\varepsilon \, \overrightarrow{V_\varepsilon}^{(0)} \cdot \boldsymbol{\nu}  & = & 0  &
    \text{on} \ \Gamma_\varepsilon^{(0)},
\\[2mm]
 u_\varepsilon \big|_{x_i= \ell_i}
 & = & q_i, & \text{on} \ \Upsilon_{\varepsilon}^{(i)} (\ell_i) , \ \  i\in\{1,2,3\},
 \\[2mm]
 u_\varepsilon\big|_{x_i= \varepsilon \ell_0-0} &=& u_\varepsilon\big|_{x_i= \varepsilon \ell_0+0}, &  i\in\{1,2,3\},
 \\[2mm]
 a_{ii}^{(i)} \, \partial_{x_i}u_\varepsilon\big|_{x_i= \varepsilon \ell_0+0} &=&
 \sum_{j=1}^{3 } \big(a_{ij}^{(0)}(\frac{x}{\varepsilon}) \partial_{x_j} u_\varepsilon\big)\big|_{x_i= \varepsilon \ell_0-0}, &
 i\in\{1,2,3\},
\end{array}\right.
\end{equation}
where $\sigma_\varepsilon(u):= (\mathbb{D}_\varepsilon  \nabla u) \cdot {\boldsymbol{\nu}}$  is the conormal derivative of $u;$  ${\boldsymbol{\nu}}$ is the outward unit normal to $\partial \Omega_\varepsilon;$ the given constants $q_1, q_2, q_3$ are positive; \,  for each $i\in \{1,2,3\}$ the given function $\varphi^{(i)}$ belongs to the space $C_0^\infty((0, \ell_i)).$     Also, the diffusion matrix $\mathbb{D}_\varepsilon$  depends on the parts of the thin junction $\Omega_\varepsilon,$ namely,
$$
\mathbb{D}_\varepsilon(x) =
\left(
\begin{matrix}
  a^{(0)}_{11}(\frac{x}{\varepsilon}) & a^{(0)}_{12}(\frac{x}{\varepsilon}) & a^{(0)}_{13}(\frac{x}{\varepsilon}) \\[2mm]
  a^{(0)}_{21}(\frac{x}{\varepsilon}) & a^{(0)}_{22}(\frac{x}{\varepsilon}) & a^{(0)}_{23}(\frac{x}{\varepsilon}) \\[2mm]
  a^{(0)}_{31}(\frac{x}{\varepsilon}) & a^{(0)}_{32}(\frac{x}{\varepsilon}) & a^{(0)}_{33}(\frac{x}{\varepsilon})
\end{matrix}
\right)  =: \mathbb{D}_\varepsilon^{(0)}(x), \qquad x \in \ \Omega_\varepsilon^{(0)},
$$

$$
\mathbb{D}_\varepsilon(x)  =
\left(
\begin{matrix}
  a^{(1)}_{11} & 0 & 0 \\[2mm]
  0 & a^{(1)}_{22}(\frac{\overline{x}_1}{\varepsilon}) &  a^{(1)}_{23}(\frac{\overline{x}_1}{\varepsilon}) \\[2mm]
  0 &  a^{(1)}_{32}(\frac{\overline{x}_1}{\varepsilon}) &  a^{(1)}_{33}(\frac{\overline{x}_1}{\varepsilon})
\end{matrix}
\right)
 =:  \mathbb{D}_\varepsilon^{(1)}(x), \qquad x \in \ \Omega_\varepsilon^{(1)},
$$

$$
\mathbb{D}_\varepsilon(x)  =
\left(
\begin{matrix}
  a^{(2)}_{11}(\frac{\overline{x}_2}{\varepsilon}) & 0 & a^{(2)}_{13}(\frac{\overline{x}_2}{\varepsilon}) \\[2mm]
  0 & a^{(2)}_{22} & 0 \\[2mm]
  a^{(2)}_{31}(\frac{\overline{x}_2}{\varepsilon}) & 0 & a^{(2)}_{33}(\frac{\overline{x}_2}{\varepsilon})
\end{matrix}
\right)
 =:  \mathbb{D}_\varepsilon^{(2)}(x), \qquad x \in \ \Omega_\varepsilon^{(2)},
$$

$$
\mathbb{D}_\varepsilon(x) =
\left(
\begin{matrix}
  a^{(3)}_{11}(\frac{\overline{x}_3}{\varepsilon}) & a^{(3)}_{12}(\frac{\overline{x}_3}{\varepsilon}) & 0 \\[2mm]
  a^{(3)}_{21}(\frac{\overline{x}_3}{\varepsilon}) & a^{(3)}_{22}(\frac{\overline{x}_3}{\varepsilon}) & 0 \\[2mm]
  0 & 0 & a^{(3)}_{33}
\end{matrix}
\right)
  =:  \mathbb{D}_\varepsilon^{(3)}(x), \qquad x \in \ \Omega_\varepsilon^{(3)}.
$$
In addition, all matrices are symmetric, i.e., for all $i\in\{0, 1,2,3\}$  we have $a^{(i)}_{mn} = a^{(i)}_{nm},$ and   there exist positive constants $\kappa_0^{(i)}, \ \kappa_1^{(i)},$ and
$\varepsilon_0$  such that  for all $x \in \Omega_\varepsilon,$ $\varepsilon \in (0, \varepsilon_0),$
and $y \in \Bbb R^3$
\begin{equation}\label{n1}
  \kappa_0^{(i)} \, |y|^2 \le \sum_{m,n}^{3} a^{(i)}_{mn}(x,\varepsilon) y_m y_n \le \kappa_1^{(i)} \, |y|^2 ,
\end{equation}
and  their elements are smooth in all variables and bounded in $\overline{\Omega}_\varepsilon.$  From  \eqref{n1} it follows that the constants   $\{a^{(i)}_{ii}\}_{i=1}^3$ are positive.

For any functions $u, v \in C^2(\overline{\Omega}_\varepsilon),$
Green's formula hold, i.e.
\begin{equation}\label{G_1}
 \int_{\Omega_\varepsilon} \mathcal{L}(u) \, v \, dx
  =
 \varepsilon  \int_{\Omega_\varepsilon}\nabla v \cdot (\mathbb{D}_\varepsilon \nabla u) \, dx -
  \int_{\Omega_\varepsilon} u \, \overrightarrow{V_\varepsilon}\cdot \nabla v \, dx
  + \int_{\partial \Omega_\varepsilon}\big(-\varepsilon \sigma_\varepsilon(u)  + u\, \overrightarrow{V_\varepsilon}\cdot \boldsymbol{\nu} \big) \, v \, dS_x
\end{equation}
and
\begin{equation}\label{G_2}
 \int_{\Omega_\varepsilon}  \mathcal{L}^*(v)\, u \, dx
  =
 \varepsilon  \int_{\Omega_\varepsilon}\nabla u \cdot (\mathbb{D}_\varepsilon \nabla v) \, dx -
  \int_{\Omega_\varepsilon} u \, \overrightarrow{V_\varepsilon}\cdot \nabla v \, dx
  -\varepsilon  \int_{\partial \Omega_\varepsilon}\sigma_\varepsilon(v)   \, u \, dS_x ,
\end{equation}
where $ \mathcal{L}_\varepsilon(u) :=  -\varepsilon\, \mathrm{div}(\mathbb{D}_\varepsilon \, \nabla u) +  \mathrm{div}(\overrightarrow{V_\varepsilon}\, u)$ and $\mathcal{L}^*_\varepsilon(v):= -\varepsilon\, \mathrm{div}(\mathbb{D}_\varepsilon \, \nabla v) -  \overrightarrow{V_\varepsilon}\cdot \nabla v$.
Formulae \eqref{G_1} and \eqref{G_2}  mean that the operator $\mathcal{L}^*_\varepsilon$ is  formal adjoint of
$ \mathcal{L}_\varepsilon.$

\begin{definition}
A function $u_\varepsilon \in H^1(\Omega_\varepsilon)$ such that $u_\varepsilon|_{x_i=\ell_i} = q_i$ for $i\in \{1, 2, 3\}$ is called a weak solution to the problem \eqref{probl} if
\begin{equation}\label{int_def}
  \varepsilon \int_{\Omega_\varepsilon} \nabla \psi \cdot (\mathbb{D}_\varepsilon \nabla u_\varepsilon)\, dx -
  \int_{\Omega_\varepsilon} u_\varepsilon \, \overrightarrow{V_\varepsilon} \cdot \nabla \psi \, dx
      + \varepsilon \sum_{i=1}^{3}\int_{\Gamma_\varepsilon^{(i)}}\varphi^{(i)}\, \psi \, dS_x = 0
\end{equation}
for any function $\psi \in H^1(\Omega_\varepsilon; \Upsilon_\varepsilon):= \{u\in H^1(\Omega_\varepsilon)\colon u|_{\Upsilon_\varepsilon}=0\},$ where $\Upsilon_\varepsilon := \bigcup_{i=1}^3 \Upsilon_{\varepsilon}^{(i)}(\ell_i).$
\end{definition}

It follows from the theory of linear elliptic boundary-value problems that, for any fixed value of $\varepsilon,$ the problem  \eqref{probl}
possesses a unique weak solution. Indeed, if there are two solutions to the problem \eqref{probl} then their difference, the  function $\Phi \in H^1(\Omega_\varepsilon; \Upsilon_\varepsilon),$ is  a weak solution of
\begin{equation}\label{probl_Phi}
\left\{
  \begin{array}{l}
    \mathcal{L}_\varepsilon(\Phi) = 0 \quad \text{in} \ \Omega_\varepsilon,
 \\
    \Phi = 0\quad  \text{on} \ \Upsilon_\varepsilon, \qquad -\varepsilon \sigma_\varepsilon(\Phi)  + \Phi \, \overrightarrow{V_\varepsilon}\cdot \boldsymbol{\nu}
=0 \quad \text{on} \ \partial\Omega_\varepsilon\setminus \Upsilon_\varepsilon.
  \end{array}
\right.
\end{equation}
This problem is adjoint to the  problem
\begin{equation}\label{adjoint_probl_Phi}
\left\{\begin{array}{l}
\mathcal{L}^*_\varepsilon(\Psi) = 0 \quad \text{in} \ \Omega_\varepsilon,
\\
\Psi = 0\quad  \text{on} \ \Upsilon_\varepsilon,
\qquad
-\varepsilon \sigma_\varepsilon(\Psi)=0\quad  \text{on} \ \partial\Omega_\varepsilon\setminus \Upsilon_\varepsilon.
\end{array}\right.
\end{equation}
Due to our assumptions and the result of problem 3.1 \cite[\S 3]{GilTru} the solution to the problem \eqref{adjoint_probl_Phi}
is trivial. Therefore, employing the Fredholm theory, we can state that there is a unique weak solution to the problem~\eqref{probl}.

Our aim  is  to construct the asymptotic expansion for the solution $u_\varepsilon,$
  to derive the corresponding limit problem $(\varepsilon =0)$, and
 to prove the corresponding asymptotic estimates.

\section{Formal asymptotic expansions}\label{Sec:expansions}
\subsection{Regular part of the asymptotics}\label{regular_asymptotic}
We seek it  in the form
\begin{equation}\label{regul}
\mathfrak{U}_\varepsilon^{(i)} := w_0^{(i)} (x_i) + \sum\limits_{k=1}^{+\infty} \varepsilon^{k}
    \left(w_k^{(i)} (x_i) + u_k^{(i)} \left( x_i, \dfrac{\overline{x}_i}{\varepsilon} \right)
     \right).
\end{equation}
For each $i \in \{1,2,3\},$ the ansatz $\mathfrak{U}_\varepsilon^{(i)}$ is located inside of the thin cylinder $\Omega^{(i)}_\varepsilon$ and their  coefficients depend both on the corresponding longitudinal variable $x_i$ and so-called "quick variables" $\frac{\overline{x}_i}{\varepsilon}.$

Formally substituting $\mathfrak{U}_\varepsilon^{(i)}$  into the corresponding differential equation of problem~\eqref{probl}, into the boundary condition on the lateral surface of the thin cylinder $\Omega^{(i)}_\varepsilon,$ and collecting terms of the same powers of $\varepsilon,$ we obtain
\begin{multline}\label{rel_1}
    -
    \mathrm{div}_{\bar{\xi}_i}
    \big(
        \tilde{\mathbb{D}}^{(i)}(\bar{\xi}_i)
        \nabla_{\bar{\xi}_i} u^{(i)}_1(x_i, \bar{\xi}_i)
    \big)
    +
    \big(  v_i^{(i)}(x_i) \, w^{(i)}_0(x_i) \big)^\prime
    +
    w^{(i)}_0(x_i) \,
    \mathrm{div}_{\bar{\xi}_i}
    \big( \overline{V}^{(i)}(x_i, \bar{\xi}_i) \big)
\\
    +
    \sum\limits_{k=1}^{+\infty}\varepsilon^{k}
    \Bigg(
        -
        \mathrm{div}_{\bar{\xi}_i}
        \big(
            \tilde{\mathbb{D}}^{(i)}(\bar{\xi}_i)
            \nabla_{\bar{\xi}_i} u^{(i)}_{k+1}(x_i, \bar{\xi}_i)
        \big)
        -
        a_{ii}^{(i)}
        \Big(
            (w^{(i)}_{k-1}(x_i))^{\prime\prime}
            +
            (u^{(i)}_{k-1}(x_i, \bar{\xi}_i))^{\prime\prime}
        \Big)
\\
        +
        \Big(
            v^{(i)}(x_i) \,
            \big[ w^{(i)}_k(x_i) + u^{(i)}_{k}(x_i, \bar{\xi}_i) \big]
        \Big)^\prime
        +
        \mathrm{div}_{\bar{\xi}_i}
        \big(
            \overline{V}^{(i)}(x_i, \bar{\xi}_i) \,
            \big[ w^{(i)}_k(x_i) + u^{(i)}_{k}(x_i, \bar{\xi}_i) \big]
        \big)
    \Bigg)
    \approx
    0,
\end{multline}
and
\begin{multline}\label{rel_2}
-  \tilde{\mathbb{D}}^{(i)}(\bar{\xi}_i)\, \bar{\nu}_{\bar{\xi}_i} \cdot \nabla_{\bar{\xi}_i} u^{(i)}_1(x_i, \bar{\xi}_i)
+  w^{(i)}_0(x_i) \,  \bar{\nu}_{\bar{\xi}_i} \cdot \overline{V}^{(i)}(x_i, \bar{\xi}_i)
\\
+ \sum\limits_{k=1}^{+\infty}\varepsilon^{k}
\Big(- \tilde{\mathbb{D}}^{(i)}(\bar{\xi}_i)\, \bar{\nu}_{\bar{\xi}_i}\cdot \nabla_{\bar{\xi}_i} u^{(i)}_{k+1}(x_i, \bar{\xi}_i) +
\big[w^{(i)}_k(x_i) + u^{(i)}_{k}(x_i, \bar{\xi}_i)\big]\,  \bar{\nu}_{\bar{\xi}_i} \cdot \overline{V}^{(i)}(x_i, \bar{\xi}_i) \Big)  \approx \varphi^{(i)}(x_i),
\end{multline}
where $\bar{\xi}_i=\frac{\bar{x}_i}{\varepsilon},$ the symbol ``$\prime$'' denotes the derivative with respect to $x_i,$
$u^{(i)}_{0} \equiv 0,$ $\bar{\nu}_{\bar{\xi}_i}$ is the outward unit normal to the boundary of the 
disk  $\Upsilon_i(x_i):=\big\{ \overline{\xi}_i\in\Bbb{R}^2 \ : \ |\overline{\xi}_i|< h_i \big\},$
\begin{gather*}
\overline{V}^{(1)}(x_1, \bar{\xi}_1) =
\Big( v^{(1)}_2(x_1, \bar{\xi}_1), \, v^{(1)}_3(x_1,\bar{\xi}_1) \Big),
\qquad
\overline{V}^{(2)}(x_2, \bar{\xi}_2) =
\Big( v^{(2)}_1(x_2, \bar{\xi}_2), \, v^{(2)}_3(x_2,\bar{\xi}_2) \Big),
\\[2mm]
\overline{V}^{(3)}(x_3, \bar{\xi}_3) =
\Big( v^{(3)}_1(x_3, \bar{\xi}_3), \, v^{(3)}_2(x_3,\bar{\xi}_3) \Big),
\end{gather*}
and
\begin{gather}
\notag
\tilde{\mathbb{D}}^{(1)}(\bar{\xi}_1)
 =
\left(
    \begin{matrix}
        a^{(1)}_{22}(\bar{\xi}_1) & a^{(1)}_{23}(\bar{\xi}_1)
        \\[2mm]
        a^{(1)}_{32}(\bar{\xi}_1) & a^{(1)}_{33}(\bar{\xi}_1)
    \end{matrix}
\right),
\qquad
\tilde{\mathbb{D}}^{(2)}(\bar{\xi}_2)
 =
\left(
    \begin{matrix}
        a^{(2)}_{11}(\bar{\xi}_2) & a^{(2)}_{13}(\bar{\xi}_2)
        \\[2mm]
        a^{(2)}_{31}(\bar{\xi}_2) & a^{(2)}_{33}(\bar{\xi}_2)
    \end{matrix}
\right),
\\[2mm] \label{mat-2D}
\tilde{\mathbb{D}}^{(3)}(\bar{\xi}_3)
 =
\left(
    \begin{matrix}
        a^{(3)}_{11}(\bar{\xi}_3) & a^{(3)}_{12}(\bar{\xi}_3)
        \\[2mm]
        a^{(3)}_{21}(\bar{\xi}_3) & a^{(3)}_{22}(\bar{\xi}_3)
    \end{matrix}
\right).
\end{gather}

Equating the coefficients of the same powers of $\varepsilon$ in~\eqref{rel_1} and~\eqref{rel_2}, we deduce recurrent relations of  boundary-value problems. The  problem for  $u_1^{(i)}$ is as follows:
\begin{equation}\label{regul_probl_2}
\left\{\begin{array}{rcll}
- \mathrm{div}_{\bar{\xi}_i} \Big( \tilde{\mathbb{D}}^{(i)}(\bar{\xi}_i) \nabla_{\bar{\xi}_i} u^{(i)}_1(x_i, \bar{\xi}_i)
+ w^{(i)}_0(x_i) \, \overline{V}^{(i)}(x_i, \bar{\xi}_i)\Big)
 & = & - \, \big(v_i^{(i)}(x_i) \, w^{(i)}_0(x_i)\big)^\prime ,
 & \ \ \overline{\xi}_i\in\Upsilon_i (x_i),
\\[2mm]
-   \Big(\tilde{\mathbb{D}}^{(i)}(x_i, \bar{\xi}_i) \, \nabla_{\bar{\xi}_i} u^{(i)}_1(x_i, \bar{\xi}_i) +
  w^{(i)}_0(x_i) \,   \overline{V}^{(i)}(x_i, \bar{\xi}_i)\Big) \cdot \bar{\nu}_{\bar{\xi}_i}
 & = &  \varphi^{(i)}(x_i),
 & \ \ \overline{\xi}_i\in\partial\Upsilon_i(x_i),
\\[2mm]
\langle u_1^{(i)} (x_i,\cdot) \rangle_{\Upsilon_i (x_i)}
 & = & 0.
 &
\end{array}\right.
\end{equation}
Here the variable $x_i$ is regarded as a parameter from $ \ I_\varepsilon^{(i)} :=\{x\colon \ x_i\in  (\varepsilon \ell_0, \ell_i), \ \overline{x_i}=(0,0)\},$
$$
\langle u(x_i,\cdot) \rangle_{\Upsilon_i(x_i)} :=  \int_{\Upsilon_i(x_i)}u (x_i,\overline{\xi}_i)\, d{\overline{\xi}_i}.
$$

For each value of $i\in \{1, 2, 3\},$ the problem~\eqref{regul_probl_2} is the inhomogeneous Neumann problem  in the disk $\Upsilon_i(x_i)$ with respect to the variable ${\overline{\xi}_i}\in\Upsilon_i(x_i).$  Writing down
the necessary and sufficient condition for the solvability of problem~\eqref{regul_probl_2}, we get the following differential equation for the coefficient $w_0^{(i)}:$
\begin{equation}\label{omega_probl_2}
 - h_i^2 \, \big(v_i^{(i)}(x_i) \, w^{(i)}_0(x_i)\big)_{x_i}^\prime
 = 2 h_i \, \varphi^{(i)}(x_i),
\quad x_i\in I_\varepsilon^{(i)}.
\end{equation}
Let $w_0^{(i)}$ be a solution of~\eqref{omega_probl_2}. Then, there exists a solution to problem~\eqref{regul_probl_2} and the third relation in~\eqref{regul_probl_2} supplies its uniqueness.

For determination of the other coefficients $\{u_k^{(i)}\},$ we get the following problems:
\begin{equation}\label{regul_probl_3}
\left\{\begin{array}{rcll}
    -
    \mathrm{div}_{\bar{\xi}_i}
    \big( \tilde{\mathbb{D}}^{(i)}(\bar{\xi}_i) \nabla_{\bar{\xi}_i} u^{(i)}_{k+1} \big)
    & = &
    a_{ii}^{(i)}
    \Big( (w^{(i)}_{k-1})^{\prime\prime} + (u^{(i)}_{k-1})^{\prime\prime} \Big)
    -
    \big( v_i^{(i)}(x_i) \, \big[w^{(i)}_k + u^{(i)}_{k} \big] \big)^{\prime}
\\[2mm]
    & &
    -
    \, \mathrm{div}_{\bar{\xi}_i}
    \big(
        \overline{V}^{(i)}(x_i, \bar{\xi}_i) \,
        \big[ w^{(i)}_k  + u^{(i)}_{k} \big]
    \big),
    &
    \overline{\xi}_i\in\Upsilon_i(x_i),
 \\[2mm]
    -
    \tilde{\mathbb{D}}^{(i)}(\bar{\xi}_i) \,
    \nabla_{\bar{\xi}_i} u^{(i)}_{k+1}(x_i, \bar{\xi}_i) \cdot \bar{\nu}_{\bar{\xi}_i}
    & = &
    -
    \, \big[ w^{(i)}_k(x_i) + u^{(i)}_{k}(x_i, \bar{\xi}_i) \big] \,
    \overline{V}^{(i)}(x_i, \bar{\xi}_i) \cdot \bar{\nu}_{\bar{\xi}_i} ,
    &
    \overline{\xi}_i\in\partial\Upsilon_i(x_i),
\\[2mm]
    \langle u_{k+1}^{(i)}(x_i,\cdot) \rangle_{\Upsilon_i(x_i)}
    & = &
    0.
    &
\end{array}\right.
\end{equation}
Repeating the previous reasoning and taking into account the third relation in~\eqref{regul_probl_2}, we find
\begin{equation*}
\big(v^{(i)}(x_i) \, w^{(i)}_k(x_i)\big)^\prime =
    a_{ii}^{(i)} \, (w^{(i)}_{k-1}(x_i))^{\prime\prime},
\quad x_i\in I_\varepsilon^{(i)}, \quad i \in\{1,2,3\},
\end{equation*}
whence
\begin{equation}\label{w_k}
w^{(i)}_k(x_i) = \dfrac{a_{ii}^{(i)}}{v_i^{(i)}(x_i)}\, \big(w^{(i)}_{k-1}(x_i)\big)^\prime + \dfrac{c_{k}^{(i)}}{v^{(i)}(x_i)}, \quad k \in \Bbb N, \ \ i \in\{1,2,3\},
\end{equation}
where constants $\{c_{k}^{(i)}\}$ will be defined later.
Then the solution to problem~\eqref{regul_probl_3} is uniquely determined and problem~\eqref{regul_probl_3} can now rewritten as follows:
\begin{equation}\label{problem_u_k}
\left\{\begin{array}{rcll}
    -
    \mathrm{div}_{\bar{\xi}_i}
    \big(
        \tilde{\mathbb{D}}^{(i)}(\bar{\xi}_i)
        \nabla_{\bar{\xi}_i} u^{(i)}_{k+1}(x_i, \bar{\xi}_i)
    \big)
    & = &
    a_{ii}^{(i)} \, (u^{(i)}_{k-1}(x_i, \bar{\xi}_i))^{\prime\prime}
    -
    \big( v^{(i)}(x_i) \, u^{(i)}_{k}(x_i, \bar{\xi}_i) \big)^\prime
\\[2mm]
    & &
    -
    \, \mathrm{div}_{\bar{\xi}_i}
    \big(
        \overline{V}^{(i)}(x_i, \bar{\xi}_i) \,
        \big[ w^{(i)}_k  + u^{(i)}_{k} \big]
    \big),
    &
    \overline{\xi}_i\in\Upsilon_i(x_i),
\\[2mm]
    -
    \tilde{\mathbb{D}}^{(i)}(\bar{\xi}_i) \,
    \nabla_{\bar{\xi}_i} u^{(i)}_{k+1}(x_i, \bar{\xi}_i) \cdot \bar{\nu}_{\bar{\xi}_i}
    & = &
    -
    \, \big[ w^{(i)}_k(x_i) + u^{(i)}_{k}(x_i, \bar{\xi}_i) \big] \,  \overline{V}^{(i)}(x_i, \bar{\xi}_i) \cdot \bar{\nu}_{\bar{\xi}_i} ,
    &
    \overline{\xi}_i\in\partial\Upsilon_i(x_i),
\\[2mm]
    \langle u_{k+1}^{(i)}(x_i,\cdot) \rangle_{\Upsilon_i(x_i)}
    & = &
    0.
    &
\end{array}\right.
\end{equation}

\begin{remark}\label{r_2_1}
Since the functions $\{\varphi^{(i)}\}_{i=1}^3$ have compact supports, the coefficients $\{u_k^{(i)}\}$ vanish in the corresponding neighborhoods.

Besides, as the functions $\{v^{(i)}_i\}_{i=1}^3$ are constant in neighborhoods of the origin, the coefficients $\{w_k^{(i)}\}$ are also constant in the same intervals.
\end{remark}

\subsection{Inner part of the asymptotics}\label{subsec_Inner_part}
To uniquely define the coefficients $\{w_k^{(j)}\}$ we should launch
the inner part of the asymptotics for the solution to problem ~(\ref{probl}). For this purpose we pass to the variables $\xi=\frac{x}{\varepsilon}.$ Letting $\varepsilon$ to $0,$ we see  that the domain $\Omega_\varepsilon$ is transformed into the unbounded domain $\Xi$ that  is the union of the domain~$\Xi^{(0)}$ and three semibounded cylinders
$$
\Xi^{(j)}
 =  \{ \xi=(\xi_1,\xi_2,\xi_3)\in\Bbb R^3 \ :
    \quad  \ell_0<\xi_j<+\infty,
    \quad |\overline{\xi}_j|<h_j \},
\qquad j\in \{1,2,3\},
$$
i.e., $\Xi$ is the interior of the set $\bigcup_{j=0}^3\overline{\Xi^{(j)}}$ (see Fig.~\ref{Fig-5}).

\begin{figure}[htbp]
\begin{center}
\includegraphics[width=7cm]{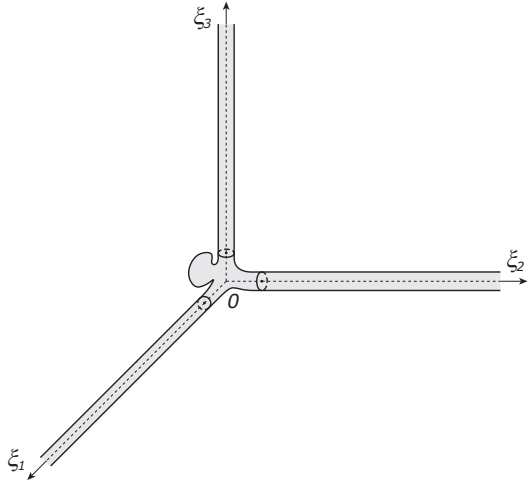}
\vskip - 10pt
\caption{Domain $\Xi$}\label{Fig-5}
\end{center}
\end{figure}

For parts of the   boundary of the domain $\Xi$ we  introduce notation
$$
\Gamma_j = \{ \xi\in\Bbb R^3 \ : \quad \ell_0 <\xi_j<+\infty, \quad |\overline{\xi}_j|=h_j\}, \quad j\in \{1,2,3\}, \quad \text{and} \quad
\Gamma_0 = \partial\Xi \backslash \left(\bigcup_{j=1}^3 \Gamma_j \right).
$$

 The following ansatz is proposed
 \begin{equation}\label{inner_part}
\mathfrak{N}_\varepsilon=\sum\limits_{k=0}^{+\infty}\varepsilon^k N_k\left(\frac{x}{\varepsilon}\right), \quad x \in \Omega^{(0)}_\varepsilon.
\end{equation}
Substituting $\mathfrak{N}_\varepsilon$  into the corresponding differential equation of problem ~(\ref{probl}),  into the boundary condition on $\partial\Omega^{(0)}_\varepsilon,$ collecting terms of the same powers of $\varepsilon$
taking into account the second part of Remark~\ref{r_2_1},  we get for each $k\in \Bbb N_0$ the following relations:

\begin{equation}\label{N_k_prob}
\left\{\begin{array}{rcll}
  -   \mathrm{div}_\xi \big( \mathbb{D}(\xi) \nabla_\xi N_k(\xi) \big) +
  \mathrm{div}_\xi \big( \overrightarrow{V}(\xi) N_k(\xi)\big)
  & = & 0, &
    \xi \in\Xi,
\\[2mm]
-\sigma_\xi(N_k(\xi)) +  N_k(\xi)\, \overrightarrow{V}(\xi) \cdot \boldsymbol{\nu}_{\xi} &=&  0, &
   \xi \in \partial\Xi,
\\[2mm]
N_k(\xi)     \  \sim \  w^{(j)}_{k}(0)        &\text{as}  &\xi_j \to +\infty, \quad \  j\in \{1,2,3\},&
  \xi  \in \Xi^{(j)},
 \\[2mm]
 N_k\big|_{\xi_j= \ell_0-0} &=& N_k\big|_{\xi_j= \ell_0+0}, &  j\in \{1,2,3\},
 \\[2mm]
 a_{jj}^{(j)} \, \partial_{\xi_j}N_k\big|_{\xi_j= \ell_0+0} &=&
 \sum_{l=1}^{3 } \big(a_{jl}^{(0)}(\xi) \, \partial_{\xi_l} N_k\big)\big|_{\xi_j= \ell_0-0}, &
 j\in \{1,2,3\},
\end{array}\right.
\end{equation}
where $\sigma_\xi(N_k) := \boldsymbol{\nu}_{\xi} \cdot {\mathbb{D}} \nabla_{\xi} N_k ,$
$\boldsymbol{\nu}_{\xi}$ is the outward unit normal to $\partial \Xi,$
 $\mathbb{D}(\xi)= \mathbb{D}^{(0)}(\xi) := \mathbb{D}_\varepsilon^{(0)}(x)\big|_{x= \varepsilon \xi}$ if $\xi \in \Xi^{(0)},$
 $$
\mathbb{D}(\xi) = \mathbb{D}^{(1)}(\overline{\xi}_1) :=
\left(
\begin{matrix}
  a^{(1)}_{11} & 0 & 0 \\[2mm]
  0 & a^{(1)}_{22}(\overline{\xi}_1)  &  a^{(1)}_{23}(\overline{\xi}_1) \\[2mm]
  0 &  a^{(1)}_{32}(\overline{\xi}_1) &  a^{(1)}_{33}(\overline{\xi}_1)
\end{matrix}
\right), \quad \xi \in \Xi^{(1)},
$$

$$
\mathbb{D}(\xi)  = \mathbb{D}^{(2)}(\overline{\xi}_2) :=
\left(
\begin{matrix}
  a^{(2)}_{11}(\overline{\xi}_2) & 0 & a^{(2)}_{13}(\overline{\xi}_2) \\[2mm]
  0 & a^{(2)}_{22} & 0 \\[2mm]
  a^{(2)}_{31}(\overline{\xi}_2) & 0 & a^{(2)}_{33}(\overline{\xi}_2)
\end{matrix}
\right),  \quad \xi \in \Xi^{(2)},
$$

$$
\mathbb{D}(\xi) = \mathbb{D}^{(3)}(\overline{\xi}_3) :=
\left(
\begin{matrix}
  a^{(3)}_{11}(\overline{\xi}_3) & a^{(3)}_{12}(\overline{\xi}_3) & 0 \\[2mm]
  a^{(3)}_{21}(\overline{\xi}_3) & a^{(3)}_{22}(\overline{\xi}_3) & 0 \\[2mm]
  0 & 0 & a^{(3)}_{33}
\end{matrix}
\right), \quad \xi \in \Xi^{(3)}
$$
(the coefficients $\{a^{(j)}_{jj}\}_{j=1}^3$ are positive constants due to our assumptions),
$\overrightarrow{V}(\xi) = \overrightarrow{V}_\varepsilon^{(0)}(x)\big|_{x= \varepsilon \xi}$  if $\xi \in \Xi^{(0)},$
$\overrightarrow{V}(\xi) = \left( \mathrm{v}_1, \, 0,\, 0\right)$ if $\xi \in \Xi^{(1)},$
$\overrightarrow{V}(\xi) = \left(0, \,  \mathrm{v}_2,\, 0\right)$ if $\xi \in \Xi^{(2)},$ and
$\overrightarrow{V}(\xi) = \left( 0,\, 0, \, \mathrm{v}_3\right)$ if $\xi \in \Xi^{(3)}.$
\
Relations in the third line of \eqref{N_k_prob} appear by matching the regular and inner asymptotics in a neighborhood of the node, namely the asymptotics of the terms $\{N_k\}$ as $\xi_j \to +\infty$ have to coincide with the corresponding asymptotics of  terms of the regular expansions (\ref{regul}) as $x_j =\varepsilon \xi_j \to +0, \ j\in \{1,2,3\},$ respectively.

Thus, we must  find a solution to the problem \eqref{N_k_prob} so  for each $j\in \{1,2,3\}$ it  stabilizes to the constant $w^{(j)}_{k}(0)$ as $\xi_j \to +\infty$  in the outlet $\Xi^{(j)}.$
We look for a solution to the problem \eqref{N_k_prob} with a fixed index  $k\in \Bbb N$  in the form
\begin{equation}\label{new-solution}
N_k(\xi) = \sum\limits_{j=1}^3w^{(j)}_{k}(0) \,\chi_{\ell_0}(\xi_j) + \widetilde{N}_k(\xi),
\end{equation}
where $ \chi_{\ell_0} \in C^{\infty}(\Bbb{R})$ is a cut-off function such that
$\ 0\leq \chi_{\ell_0} \leq1,$  $\chi_{\ell_0}(t) =0$ if $t \leq  1+\ell_0$  and
$\chi_{\ell_0}(t) =1$ if $t \geq  2+\ell_0.$
Then $\widetilde{N}_k$ has  to be  a  solution to the problem
\begin{equation}\label{tilda_N_k_prob}
\left\{\begin{array}{rcll}
  -   \mathrm{div}_\xi \big( \mathbb{D}^{(0)}(\xi) \nabla_\xi \widetilde{N}_k(\xi) \big) +
  \mathrm{div}_\xi \big( \overrightarrow{V}(\xi) \widetilde{N}_k(\xi)\big)
  & = & 0, &
    \xi \in\Xi^{(0)},
\\[2mm]
-\sigma_\xi(\widetilde{N}_k(\xi)) +  \widetilde{N}_k(\xi)\, \overrightarrow{V}(\xi) \cdot \boldsymbol{\nu}_{\xi} &=&  0, &
   \xi \in \Gamma_0,
\\[2mm]
 -   \mathrm{div}_\xi \big( \mathbb{D}^{(j)}(\xi) \nabla_\xi \widetilde{N}_k(\xi) \big) +
  \mathrm{v}_j \, \partial_{\xi_j} \widetilde{N}_k(\xi)  & = & f_j(\xi_j), &
    \xi \in\Xi^{(j)},
\\[2mm]
-\sigma_\xi(\widetilde{N}_k(\xi))  &=&  0, &
   \xi \in \Gamma_j, \quad j\in \{1,2,3\},
\end{array}\right.
\end{equation}
and has to satisfy the conditions:
\begin{equation}\label{junc_probl_general+cond}
   \widetilde{N}_k(\xi)  \rightarrow  0
   \quad \text{as} \quad \xi_j \to +\infty, \ \  \xi  \in \Xi^{(j)}, \quad j\in \{1,2,3\},
\end{equation}
where $ \partial_{\xi_j} = \frac{\partial}{\partial_{\xi_j}},$
\begin{equation}\label{F_1}
f_j(\xi_j)  =  a^{(j)}_{jj} \, w^{(j)}_k(0) \, \chi''_{\ell_0}(\xi_j) - \mathrm{v}_j  \, w^{(j)}_k(0) \, \chi'_{\ell_0}(\xi_j) .
\end{equation}

\begin{remark}
Hereafter, to simplify the formulas, we omit the conjugation conditions  on  $\{\Upsilon_j(\ell_0)\}_{ j=1}^3$.
\end{remark}

Boundary-value problems in unbounded domains are well studied (see e.g. \cite{Ber-Nir_1990, Kon-Ole_1983,Koz-Maz-Ros_97,Na-Pla,Naz96,Naz99,Ole_book_1996}).
We will use the approach proposed in \cite[\S 2.2]{Ole_book_1996} and \cite[\S 3]{Naz99}.

Let us define a weighted Sobolev space $\mathcal{H}_\beta,$ where $\beta \in \Bbb R,$ as the set of all functions from $H^1(\Xi)$
with the finite norm
$$
\|u\|_{\beta} := \bigg(\int_{\Xi} \varrho(\xi)\Big(|\nabla_\xi u|^2 + |u|^2 \Big) d\xi  \bigg)^{1/2},
$$
where $\varrho$ is a smooth function such that
$$
\varrho(\xi) = \left\{
                 \begin{array}{ll}
                   1, & \xi \in \Xi^{(0)},
\\
                   e^{\beta \xi_j} , & \xi_j \ge 2 \ell_0, \ \ \xi \in \Xi^{(j)}, \ j\in\{1, 2, 3\}.
                 \end{array}
               \right.
$$

\begin{lemma}
There exists $\vartheta_0 > 0$ such that for all $\beta \in (0, \vartheta_0)$    the problem \eqref{tilda_N_k_prob}-\eqref{junc_probl_general+cond}  has unique solution in the space $\mathcal{H}_\beta$ if and only if
the equality
\begin{equation}\label{cong_cond_g}
\sum_{j=1}^{3}\int_{\Xi^{(j)}} f_j\,  d\xi = 0
\end{equation}
is satisfied. The equality \eqref{cong_cond_g} is equivalent to
\begin{equation}\label{cong_cond}
  \sum_{j=1}^{3} \mathrm{v}_j \, h_j^2 \, w_k^{(j)}(0) = 0.
\end{equation}
 \end{lemma}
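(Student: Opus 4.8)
The plan is to analyze the problem \eqref{tilda_N_k_prob}--\eqref{junc_probl_general+cond} via the Fredholm-type framework for boundary-value problems in domains with cylindrical outlets to infinity, following \cite[\S 2.2]{Ole_book_1996} and \cite[\S 3]{Naz99}. The first step is to identify the exponential decay/growth rates at each outlet $\Xi^{(j)}$. On the semi-infinite cylinder with the operator $-\mathrm{div}_\xi(\mathbb{D}^{(j)}\nabla_\xi \cdot) + \mathrm{v}_j \partial_{\xi_j}\cdot$ and homogeneous conormal conditions on $\Gamma_j$, one separates variables $\widetilde N_k = e^{\lambda \xi_j} v(\overline{\xi}_j)$ and obtains a quadratic operator pencil in $\lambda$ on the cross-section disk $\Upsilon_j$. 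The eigenvalue $\lambda=0$ is always present (constants are solutions in the outlet since the convection term is purely longitudinal and the conormal condition is homogeneous there), and I would show it is simple, isolated, with no other eigenvalues on the line $\mathrm{Re}\,\lambda = 0$, and that the next eigenvalues to the right/left are separated from zero; this gives the threshold $\vartheta_0>0$ so that for $\beta\in(0,\vartheta_0)$ the weighted space $\mathcal H_\beta$ "captures" exactly the solutions decaying like $e^{-\beta\xi_j}$ and excludes the constant mode. Since the right-hand sides $f_j$ have compact support (they are built from $\chi'_{\ell_0},\chi''_{\ell_0}$), they lie in $\mathcal H_\beta$ trivially, and likewise in $\mathcal H_{-\beta}$.

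The second step is the Fredholm/duality argument. For $\beta$ in the admissible range, the operator of problem \eqref{tilda_N_k_prob} acting $\mathcal H_\beta \to (\mathcal H_{-\beta})^*$ (or into the appropriate space of right-hand sides) is Fredholm of index equal to minus the number of outlets, because passing from $\mathcal H_\beta$ to $\mathcal H_{-\beta}$ one picks up one extra dimension per outlet (the constant mode $e^{0\cdot\xi_j}$), so the index jumps by $3$; hence the cokernel in the relevant setting is one-dimensional. The nontrivial element of the cokernel is the constant function $\mathbf 1$ on $\Xi$ (or, more precisely, the solution of the formally adjoint homogeneous problem that stabilizes to constants — and by the structure of $\mathcal L^*_\varepsilon$, namely $-\varepsilon\,\mathrm{div}(\mathbb D\nabla\cdot) - \overrightarrow V\cdot\nabla\cdot$, the constant is a solution of the adjoint equation with the adjoint boundary condition $-\sigma_\xi(\cdot)=0$). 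Pairing the equation \eqref{tilda_N_k_prob} against $\mathbf 1$ and integrating by parts (using Green's formula of the type \eqref{G_2}, and that all boundary terms vanish by the homogeneous conditions on $\Gamma_0$ and $\Gamma_j$, plus decay at infinity) leaves exactly $\sum_{j=1}^3\int_{\Xi^{(j)}} f_j\,d\xi$ on the right. So solvability in $\mathcal H_\beta$ holds iff \eqref{cong_cond_g} is satisfied, and uniqueness in $\mathcal H_\beta$ follows because the homogeneous problem has only the trivial solution there (any solution in $\mathcal H_\beta$ decays, while any decaying solution of the homogeneous problem must vanish — this uses an energy estimate on truncated domains exploiting $\mathrm{div}\,\overrightarrow V$ control and ellipticity \eqref{n1}, together with the maximum principle or a Rellich-type identity to kill it).

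The third, routine, step is to evaluate \eqref{cong_cond_g} explicitly: from \eqref{F_1}, $\int_{\Xi^{(j)}} f_j\,d\xi = |\Upsilon_j|\int_{\ell_0}^\infty\big(a^{(j)}_{jj} w^{(j)}_k(0)\chi''_{\ell_0}(\xi_j) - \mathrm{v}_j w^{(j)}_k(0)\chi'_{\ell_0}(\xi_j)\big)d\xi_j$; since $|\Upsilon_j| = \pi h_j^2$, $\int\chi''_{\ell_0} = 0$ (as $\chi'_{\ell_0}$ has compact support), and $\int\chi'_{\ell_0} = 1$, this equals $-\pi h_j^2\,\mathrm{v}_j\,w^{(j)}_k(0)$; summing over $j$ and dropping the harmless factor $-\pi$ yields \eqref{cong_cond}.

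\textbf{Main obstacle.} The delicate part is the first step — establishing that the operator pencils at the three outlets have no eigenvalues in a punctured strip $0<|\mathrm{Re}\,\lambda|<\vartheta_0$ and that $\lambda=0$ is a simple eigenvalue (geometric and algebraic multiplicity one, i.e. no generalized Jordan chain producing a linear-in-$\xi_j$ solution). This is what makes $\vartheta_0>0$ well-defined and what guarantees that $\mathcal H_\beta$ selects precisely the "stabilize-to-zero" solutions; it also underlies the exponential (rather than merely algebraic) rate of stabilization claimed in the introduction. The analysis of the adjoint cokernel — showing it is spanned exactly by the constant and nothing more — requires the same spectral information together with the sign condition $\mathrm{v}_j h_j^2 > 0$ built into the hypotheses on $\overrightarrow V$; I would treat it carefully but it reduces to the pencil computation once that is in hand.
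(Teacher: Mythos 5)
Your outline follows the same route as the paper's proof: reduce the question to the quadratic operator pencils on the cross-sections of the three outlets, show that $\lambda=0$ is the only eigenvalue in a strip about the critical line and carries no Jordan chain, invoke the weighted-space solvability theory of \cite[\S 2.2]{Ole_book_1996} and \cite[\S 3]{Naz99} to get a single compatibility condition obtained by pairing against the constant, and then evaluate $\int_{\Xi^{(j)}}f_j\,d\xi=-\pi h_j^2\,\mathrm{v}_j\,w_k^{(j)}(0)$ from \eqref{F_1}; your final computation and the passage from \eqref{cong_cond_g} to \eqref{cong_cond} are correct and identical to the paper's.

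Two caveats. First, your Fredholm bookkeeping is internally inconsistent: you assert the index equals minus the number of outlets ($-3$) and, in the same breath, that the kernel is trivial and the cokernel one-dimensional, which would force index $-1$. The quantity that equals $3$ is the index \emph{jump} between $\mathcal H_{\beta}$ and $\mathcal H_{-\beta}$; the dimension of the cokernel is the dimension of the space of bounded solutions of the adjoint homogeneous problem, and the claim that this space is spanned by the constants is exactly what the paper (like you) delegates to \cite[\S 3]{Naz99}. The conclusion is unaffected, but the stated index is wrong. Second, essentially all of the paper's proof is devoted to the step you defer as the ``main obstacle'': it shows (i) $\lambda=0$ has no generalized eigenfunction because the candidate would solve the Neumann problem \eqref{pensil_prob_1} whose right-hand side $i\mathrm{v}_1$ has nonzero mean over the cross-section --- this is precisely where $\mathrm{v}_j\neq 0$ enters; (ii) there is no other eigenvalue on the critical line, by extending a putative eigenfunction to a solution on the doubly infinite cylinder that is periodic in the axial variable and applying the maximum principle \cite{GilTru}; and (iii) eigenvalues avoid any fixed strip once $|\mathrm{Re}(\lambda)|$ is large, by an energy estimate, which combined with discreteness of the pencil spectrum yields $\vartheta_0>0$. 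Your sketch is compatible with all of this, but without these three arguments the existence of $\vartheta_0$ and the simplicity of $\lambda=0$ remain unproved. (A small further slip: the sign condition you invoke, $\mathrm{v}_jh_j^2>0$, is false here since $\mathrm{v}_1<0$ while $\mathrm{v}_2,\mathrm{v}_3>0$; only $\mathrm{v}_j\neq0$ is used.)
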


\begin{proof}
As follows from \cite[\S 2.2]{Ole_book_1996} and \cite[\S 3]{Naz99}, we should consider in each cross-section $\Upsilon^{(j)}:= \{\bar{\xi}_j\colon |\bar{\xi}_j| < h_j\}$ of the semi-cylinder $\Xi^{(j)}$  $(j \in \{1, 2, 3\})$ the following spectral problem (for definiteness, the case $j= 1$ is considered)
\begin{equation}\label{pensil_prob}
\left\{\begin{array}{rcll}
    \displaystyle {\sum_{j,k=2}^{3}\frac{\partial}{\partial \xi_j} \Big( a_{jk}^{(1)}(\xi_2, \xi_3) \frac{\partial U(\xi_2,\xi_3)}{\partial \xi_k} \Big) } & = &
 \displaystyle { i \lambda \mathrm{v}_1  U(\xi_2,\xi_3) + \lambda^2 a_{11}^{(1)} U(\xi_2,\xi_3)}, &
    (\xi_2, \xi_3) \in\Upsilon^{(1)},
\\[2mm]
\displaystyle{\sigma_{\xi_2,\xi_3}(U)}
 &=&  0, &
   (\xi_2, \xi_3) \in \partial \Upsilon^{(1)}.
\end{array}\right.
\end{equation}
Here $i$ is the imaginary unit, \
$\displaystyle \sigma_{\xi_2,\xi_3}(U) := \sum\nolimits_{j,k=2}^{3} a_{jk}^{(1)}(\xi_2, \xi_3) \frac{\partial U(\xi_2,\xi_3)}{\partial \xi_k} \, \nu_j(\xi_2,\xi_3).$

Obviously, that $\lambda =0 $ is an eigenvalue of \eqref{pensil_prob} and $U = const$ is the corresponding eigenfunction.
Let us show that there is no adjoint function. Really, if $U_1 \equiv 1$ is the eigenfunction and $U_2$ is an adjoint function, then it must be a solution to the Neumann problem
\begin{equation}\label{pensil_prob_1}
\left\{\begin{array}{rcll}
    \displaystyle {\sum_{j,k=2}^{3}\frac{\partial}{\partial \xi_j} \Big( a_{jk}^{(1)}(\xi_2, \xi_3) \frac{\partial U_2(\xi_2,\xi_3)}{\partial \xi_k} \Big) } & = &   i \mathrm{v}_1, & (\xi_2, \xi_3) \in\Upsilon^{(1)},
\\[2mm]
\displaystyle{\sigma_{\xi_2,\xi_3}(U_2)}
 &=&  0, &
   (\xi_2, \xi_3) \in \partial \Upsilon^{(1)},
\end{array}\right.
\end{equation}
but this is impossible.

Now we will prove that only one eigenvalue $\lambda = 0$ of the spectral problem \eqref{pensil_prob} lies on the real axis.
Assume that there is another eigenvalue $\mu \in \Bbb R \setminus \{0\}$ of \eqref{pensil_prob} and $\Phi$ is an eigenfunction corresponding to $\mu.$ Then it easy to verify that the function
$$
\Psi(\xi) = e^{i\mu \xi_1} \, \Phi(\xi_2, \xi_3), \quad  \xi = (\xi_1, \xi_2, \xi_3) \in \mathfrak{C}:= (-\infty, + \infty) \times \Upsilon^{(1)},
$$
is a solution to the problem
\begin{equation}\label{prob_cylinder}
\left\{\begin{array}{rcll}
    \displaystyle {- a_{11}^{(1)} \frac{\partial^2 \Psi}{\partial \xi_1^2}  - \sum_{j,k=2}^{3}\frac{\partial}{\partial \xi_j} \Big( a_{jk}^{(1)}(\xi_2, \xi_3) \frac{\partial \Psi}{\partial \xi_k} \Big)  +  \mathrm{v}_1 \frac{\partial \Psi}{\partial \xi_1}} & = & 0, & \text{in} \ \mathfrak{C},
\\[2mm]
\displaystyle{\sigma_{\xi_2,\xi_3}(\Psi)}
 &=&  0, &
    \text{on}\  \partial \mathfrak{C}.
\end{array}\right.
\end{equation}
Let $\Phi(\xi_2, \xi_3) = \phi_1(\xi_2, \xi_3) + i \phi_2(\xi_2, \xi_3),$ where $\phi_1 = \text{\rm Re}(\Phi),$ $\phi_2 = \text{\rm Im}(\Phi).$ Then the function
$$
\Psi_1(\xi) = \cos(\mu \xi_1) \, \phi_1(\xi_2, \xi_3) - \sin(\mu \xi_1) \, \phi_2(\xi_2, \xi_3), \quad \xi  \in \mathfrak{C},
$$
is also  a solution to the problem  \eqref{prob_cylinder}. But  $\Psi_1$ is periodic in $\xi_1$ and according to the maximum principle (see, e.g., \cite{GilTru}) it must be constant. This is possible if and only if $\mu = 0.$

The spectrum of the problem \eqref{pensil_prob} is discrete (see \cite{Goh_Kre_1969}). In addition, there are no eigenvalues of this problem in any strip $\{\lambda \in \Bbb C\colon \ |\text{\rm Im}(\lambda)| < K\}$ if $|\text{\rm Re}(\lambda)|$ is big enough.
Indeed, multiplying the differential equation of  the problem \eqref{pensil_prob} by $\bar{U},$ integrating by parts and taking \eqref{n1} into account, we derive
\begin{gather}
  \kappa_0 \int_{\Upsilon^{(1)}} |\nabla_{\xi_2, \xi_3}U|^2 d\xi_2 d\xi_3 \le - i \lambda \mathrm{v}_1  \int_{\Upsilon^{(1)}} |U|^2 d\xi_2 d\xi_3 -  \lambda^2 a_{11}^{(1)}  \int_{\Upsilon^{(1)}} |U|^2 d\xi_2 d\xi_3 \notag
\\
= \Big( \mathrm{v}_1 \, \text{\rm Im}(\lambda)  +
\big(|\text{\rm Im}(\lambda)|^2   - |\text{\rm Re}(\lambda)|^2 \big)
 a_{11}^{(1)} \Big) \int_{\Upsilon^{(1)}} |U|^2 d\xi_2 d\xi_3 \notag
\\
  \le \Big( |\mathrm{v}_1| \, K  +
\big(K^2   - |\text{\rm Re}(\lambda)|^2 \big)
 a_{11}^{(1)} \Big) \int_{\Upsilon^{(1)}} |U|^2 d\xi_2 d\xi_3. \label{eneq_1}
\end{gather}
The right-hand side in the inequality \eqref{eneq_1} is negative for sufficiently large $|\text{\rm Re}(\lambda)|$ (recall that the constant
coefficient $a_{11}^{(1)}$ is positive). Hence, this inequality is possible only in the case $U\equiv 0.$

Considering the facts obtained above, we conclude that there exists $\vartheta_0 > 0$ such that in the strip $\{\lambda \in \Bbb C\colon \ |\text{\rm Im}(\lambda)| < \vartheta_0\}$ there is only one eigenvalue $\lambda = 0$ of the problem \eqref{pensil_prob}
and the corresponding eigenfunction $U_1 \equiv 1$ has no adjoint functions (generalized eigenvectors).

As follows from \cite[\S 3]{Naz99}, a necessary and sufficient condition the unique solvability of the problem \eqref{tilda_N_k_prob}-\eqref{junc_probl_general+cond}  in the space $\mathcal{H}_\beta,$
where $\beta \in (0, \vartheta_0)$, is the equality \eqref{cong_cond_g}
that is equivalent to \eqref{cong_cond}.
\end{proof}

Thus, if we can choose functions $\{w_k^{(j)}\}$ so to satisfy condition \eqref{cong_cond}, then  problem (\ref{N_k_prob}) has a
unique solution that exponentially stabilizes to the constant $w_k^{(j)}(0)$ in each semi-cylinder $\Xi^{(j)}, \ j\in \{1, 2, 3\}.$

\subsubsection{Limit problem}\label{sub_limit_problem}
Relations \eqref{omega_probl_2} and \eqref{cong_cond}  form the problem
\begin{equation}\label{limit_prob}
 \left\{\begin{array}{rclr}
 -  h_j^2 \, \big(v^{(j)}(x_j) \, w^{(j)}_0(x_j)\big)_{x_j}^\prime &=&  2 h_j\,  \varphi^{(j)}(x_j), &    x_j\in I_j, \quad j\in \{1,2,3\},
 \\[3mm]
     \sum_{j=1}^{3} \mathrm{v}_j \, h_j^2 \, w_0^{(j)}(0) &= &0
    \\[2mm]
    w_0^{(j)}(\ell_j) & = & q_j, &  \quad j\in \{1,2\},
  \end{array}\right.
\end{equation}
where $I_j:=\{x: \ x_j\in (0,\ell_j), \ \overline{x_j}=(0,0)\},$ which is called \textit{limit problem} for problem~\eqref{probl}. The last two equalities in \eqref{limit_prob} appeared thanks to the boundary conditions on $\Upsilon^{(1)}_\varepsilon(\ell_1)$ and $\Upsilon^{(2)}_\varepsilon(\ell_2)$ in~\eqref{probl}.

From the differential equations in problem~\eqref{limit_prob} we find
\begin{equation}\label{solution}
w^{(j)}_0(x_j) =
    -
    \frac{2}{h_j \, v_j^{(j)}(x_j)}
    \Big( \int_{0}^{x_j} \varphi^{(j)}(t) \, dt  + C_j\Big),
\quad x_j\in I_j, \quad j\in \{1,2,3\}.
\end{equation}
To satisfy the boundary conditions in~\eqref{limit_prob}, we take
$$
C_j = - \frac{h_j \, v_j^{(j)}(\ell_j)}{2}\, q_j - \int_{0}^{\ell_j} \varphi^{(j)} \, dx_j, \quad j\in \{1,2\};
$$
and to satisfy the Kirchhoff condition in~\eqref{limit_prob}, we should choose
$$
C_3= - \frac{h_1 C_1 + h_2 C_2}{h_3}.
$$

Thus, the limit problem has the unique solution.
Next we can uniquely  determine the first term $N_0$ of the inner asymptotic expansion~\eqref{inner_part} as the corresponding solution to problem \eqref{N_k_prob} with $k=0.$   For each $j\in \{1,2,3\}$ the coefficient $u_1^{(j)}$ is a unique solution to problem \eqref{regul_probl_2} that now can be rewritten
in the form
\begin{equation}\label{regul_probl_2-new}
\left\{\begin{array}{rcll}
- \mathrm{div}_{\bar{\xi}_j} \big( \tilde{\mathbb{D}}^{(j)}(\bar{\xi}_j) \nabla_{\bar{\xi}_j} u^{(j)}_1(x_j, \bar{\xi}_j)\big)
 & = & \frac{2}{h_j} \,  \varphi^{(j)}(x_j),
 & \ \ \overline{\xi}_j\in\Upsilon_j (x_j),
\\[2mm]
- \bar{\nu}_{\bar{\xi}_j}\cdot \tilde{\mathbb{D}}^{(j)}(\bar{\xi}_j) \nabla_{\bar{\xi}_j} u^{(j)}_1(x_j, \bar{\xi}_j)
 & = & \varphi^{(j)}(x_j),
 & \ \ \overline{\xi}_j\in\partial\Upsilon_j(x_j),
\\[2mm]
\langle u_1^{(j)} (x_j,\cdot) \rangle_{\Upsilon_j (x_j)}
 & = & 0.
 &
\end{array}\right.
\end{equation}
Then, sequentially, we can uniquely determine all coefficients $\{u_k^{(j)}\}$ as solutions to the corresponding problems~\eqref{problem_u_k}.

In addition, we can determine all coefficients $\{w_k^{(j)}\}$ with the help of formula \eqref{w_k}. In order to unambiguously find the corresponding constants $\{c_k^{(j)}\}$, we put the ansatz \eqref{regul} with $ j\in\{1,2\}$ in the boundary conditions on
on $\Upsilon^{(1)}_\varepsilon(\ell_1)$ and $\Upsilon^{(2)}_\varepsilon(\ell_2).$ Appealing to the first part of Remark~\ref{r_2_1}, we get
$w_k^{(1)}(\ell_1)=0$ and $w_k^{(2)}(\ell_2)=0$ for $k\in \Bbb N.$ From these relations and \eqref{w_k} we find that
$$
c_{k}^{(j)} =  - a_{jj}^{(j)} \, \big(w^{(j)}_{k-1}\big)^\prime(\ell_j),
\quad k \in \Bbb N, \ \ j\in\{1,2\}.
$$

We choose the remaining constants $\{c_k^{(3)}\}_{k\in \Bbb N}$ in such a way that the solvability conditions \eqref{cong_cond} are satisfied. Now appealing to the second part of Remark~\ref{r_2_1}, we obtain from \eqref{w_k} that
$$
w^{(j)}_k(0) = \dfrac{c_{k}^{(j)}}{\mathrm{v}_j},
\quad k \in \Bbb N, \ \ j \in\{1,2,3\}.
$$
Then, from \eqref{cong_cond} we deduce that
$$
c_k^{(3)} = - \frac{h_1^2 c_k^{(1)} + h_2^2 c_k^{(2)}}{h^2_3}, \quad k\in \Bbb N.
$$

Since for each $k\in \Bbb N$  the solvability conditions \eqref{cong_cond} is fulfilled, there exists a unique solution to problem \eqref{N_k_prob}. Thus, we have uniquely determined all coefficients of the inner expansion~\eqref{inner_part}.

It remains to satisfy the boundary condition on the base $\Upsilon^{(3)}_\varepsilon(\ell_3)$ of the thin cylinder $\Omega^{(3)}_\varepsilon.$ For this we should launch the boundary-layer asymptotics.

\subsection{Boundary-layer part of the asymptotics}\label{boundary-layer}
We seek it in the form
\begin{equation}\label{prim+}
\sum\limits_{k=0}^{+\infty}\varepsilon^{k} \, \Pi_k^{(3)}\left(\frac{{x}_1}{\varepsilon}, \frac{{x}_2}{\varepsilon}, \frac{\ell_3-x_3}{\varepsilon}\right)
\end{equation}
in a neighborhood of $\Upsilon_{\varepsilon}^{(3)} (\ell_3).$

We additionally assume that component $v_3^{(3)}$ of the vector-function $\overrightarrow{V_\varepsilon}^{(3)}$ are independent of the variable $x_3$ in a neighborhood of $\Upsilon_{\varepsilon}^{(3)}(\ell_3),$ i.e.,
$$
\overrightarrow{V_\varepsilon}^{(3)} = \big( 0, 0, v_3^{(3)}(\ell_3) \big)
$$
in a neighborhood of $\Upsilon_{\varepsilon}^{(3)} (\ell_3).$ This is a technical assumption. In the general case, the coefficients need to be expanded in the  Taylor series in a neighborhood of the point $x_3=\ell_3.$

Substituting the ansatz (\ref{prim+}) into (\ref{probl})  and collecting coefficients with the same powers of $\varepsilon$, we get the following mixed boundary-value problems:
\begin{equation}\label{prim+probl}
 \left\{\begin{array}{rcll}
    - \mathrm{div}_{\bar{\xi}_3} \big( \tilde{\mathbb{D}}^{(3)}(\bar{\xi}_3) \nabla_{\bar{\xi}_3}\Pi_k^{(3)}\big)
  - a_{33}^{(3)} \, \partial^2_{\xi_3 \xi_3}\Pi_k^{(3)} - v_3^{(3)}(\ell_3) \partial_{\xi_3}\Pi_k^{(3)}
  & =    & 0,
   & \xi\in \mathfrak{C}_+^{(3)},
   \\[2mm]
  - \, \partial_{\nu_{\overline{\xi}_3}} \Pi_k^{(3)}(\xi) & =
   & 0,
   & \xi\in \mathfrak{C}_+^{(3)},
   \\[2mm]
  \Pi_k^{(3)}(\overline{\xi}_3,0) & =
   & \Phi_k,
   & \overline{\xi}_3\in\Upsilon_3(\ell_3),
   \\[2mm]
  \Pi_k^{(3)}(\xi) & \to
   & 0,
   & \xi_3\to+\infty,
 \end{array}\right.
\end{equation}\\[3mm]
where the matrix $\tilde{\mathbb{D}}^{(3)}$ is determined in~\eqref{mat-2D},  \
$\xi=(\xi_1, \xi_2, \xi_3),$ $\xi_3 = \frac{\ell_3-x_3}{\varepsilon},$ $\overline{\xi}_3 =(\xi_1, \xi_2) = \frac{\overline{x}_3}{\varepsilon},$
$$
\mathfrak{C}_+^{(3)}:=\big\{\xi: \quad \overline{\xi}_3\in\Upsilon_3(\ell_3), \quad \xi_3\in(0,+\infty)\big\},
$$
$$
\Phi_0 = q_{3} - w_{0}^{(3)}(\ell_3), \qquad
\Phi_k =  - \, w_{k}^{(3)}(\ell_3),  \quad k\in \Bbb N.
$$

For each  index $k \in \Bbb N_0,$ with the help of the  the Fourier method, we determine the solution
\begin{equation}\label{view_solution}
\Pi_k^{(3)}(\xi) =
\Phi_k \, \exp \bigg( - \frac{v_3^{(3)}(\ell_3)}{a_{33}^{(3)}} \, \xi_3 \bigg)
\end{equation}
of problem (\ref{prim+probl}). Since  $v_3^{(3)}(\ell_3) > 0$ and $a_{33}^{(3)} >0,$
the solution $\Pi_k^{(3)}$ tends to zero as $\xi_3\to+\infty.$

\section{Complete asymptotic expansion and its justification}\label{Sec:justification}

Thus,  we  can successively determine all coefficients of series (\ref{regul}), (\ref{inner_part}) and (\ref{prim+}).
With their help, we construct the following series:

\begin{equation}\label{asymp_expansion}
    \mathcal{R}_\varepsilon =
        \sum\limits_{k = 0}^{+\infty} \varepsilon^k
        \Big(
            \overline{  u}_k (x, \,\varepsilon)
          + \overline{  N}_k (x, \,\varepsilon)
          + \overline{\Pi}_k (x, \,\varepsilon)
        \Big), \quad
    x\in\Omega_\varepsilon.
\end{equation}
where
\begin{gather*}
    \overline{u}_k (x, \,\varepsilon)
     := \sum\limits_{i=1}^3
        \chi_{\delta, 0}^{(i)} (x_i)
        \Big(
            w_k^{(i)} (x_i)
          + u_k^{(i)} \Big( x_i, \frac{\overline{x}_i}{\varepsilon} \Big)
        \Big), \quad
   (u_0 \equiv 0),
\\
    \overline{N}_k (x,\, \varepsilon)
     := \Bigg(
            1
          - \sum\limits_{i = 1}^3
            \chi_{\delta, 0}^{(i)} (x_i)
        \Bigg)
        N_k \Big( \frac{x}{\varepsilon} \Big),
\\
    \overline{\Pi}_k (x, \varepsilon)
     := \chi_\delta^{(3)} (x_3) \,
        \Pi_k^{(3)}
        \Big(
            \frac{\overline{x}_3}{\varepsilon},
            \frac{\ell_3 - x_3}{\varepsilon}
        \Big), \quad
        k\in\mathbb{N}_0,
\end{gather*}
$\chi_{\delta, 0}^{(i)}, \ \chi_\delta^{(3)}$
are smooth cut-off functions defined by formulas
\begin{equation}\label{cut-off_functions}
\chi_{\delta, 0}^{(i)} (x_i)
  = \left\{
    \begin{array}{ll}
        1, & \text{if} \ \ x_i \ge \, \varepsilon \ell_0 + 2 \delta,
    \\
        0, & \text{if} \ \ x_i \le \, \varepsilon \ell_0 + \delta,
    \end{array}
    \right.
\quad
\chi_\delta^{(3)} (x_3) =
    \left\{
    \begin{array}{ll}
        1, & \text{if} \ \ x_3 \ge \ell_3 -  \delta,
    \\
        0, & \text{if} \ \ x_3 \le \ell_3 - 2\delta,
    \end{array}
    \right.
\quad i = 1, 2, 3,
\end{equation}
and $\delta$ is a sufficiently small fixed positive number.

\bigskip

To justify the asymptotics constructed above we additionally assume that
\begin{equation}\label{assum_1}
  \forall\, i\in \{1, 2, 3\} \quad \forall\, x_i \in (0, \ell_i) \colon \ \ \frac{\partial v_i^{(i)}(x_i)}{\partial x_i} \ge 0;
\end{equation}
\begin{equation}\label{assum_2}
  \forall\, i\in \{1, 2, 3\} \colon
  \quad \kappa_0^{(i)} - d_i \ell_i > 0,
\end{equation}
where
$d_1 := \sup_{{\Omega^{(1)}_\varepsilon}} |(v_2^{(1)}, v_3^{(1)})|$
$d_2 := \sup_{{\Omega^{(2)}_\varepsilon}} |(v_1^{(2)}, v_3^{(2)})|$
$d_3 := \sup_{{\Omega^{(3)}_\varepsilon}} |(v_1^{(3)}, v_2^{(3)})|$;
\begin{equation}\label{cond_1}
  \sum_{i=1}^{3} h_i^2 \, \mathrm{v}_i = 0;
\end{equation}
and  the velocity field is conservative  in the node $\Omega_\varepsilon^{(0)}$   and
its potential $p$ is a solution of the boundary-value problem for the Laplacian
\begin{equation}\label{potential}
\left\{\begin{array}{rcll}
    \Delta_\xi p(\xi)  & = & 0, &
    \xi \in \Xi^{(0)},
\\[2mm]
\frac{\partial p(\xi)}{\partial \xi_i}  &=&  \mathrm{v}_i, &
   \xi \in \Upsilon^{(i)}(\ell_0) := \overline{\Xi^{(0)}} \cap
\big\{ \xi\colon \xi_i= \ell_0\big\},
\\[2mm]
\frac{\partial p(\xi)}{\partial \boldsymbol{\nu}}  &=&  0, &
   \xi \in \Gamma^{(0)} := \partial{\Xi^{(0)}} \setminus \Big( \bigcup_{i=1}^3 \Upsilon^{(i)}(\ell_0)\Big),
\end{array}\right.
\end{equation}
where $\frac{\partial p}{\partial \boldsymbol{\nu}}$ is the derivative along the  outward unit normal $\boldsymbol{\nu}$ to $\partial \Xi^{(0)}.$

Obviously, that due to \eqref{cond_1} the Neumann problem \eqref{potential} has a weak solution. It  is not unique and for its uniqueness  we will regard that $\int_{\Xi^{(0)}} p(\xi)\, d\xi =0.$ Thus,
\begin{equation}\label{field_0}
  \overrightarrow{V_\varepsilon}^{(0)}(x)  = \nabla_{\xi}p(\xi)\Big|_{\xi = \frac{x}{\varepsilon}} = \varepsilon \nabla_x \big( p(\frac{x}{\varepsilon}) \big), \quad x\in \Omega^{(0)}_\varepsilon.
\end{equation}

It is well known that the Laplacian of a velocity potential is equal to the divergence of the corresponding flow. Hence, $\overrightarrow{V_\varepsilon}^{(0)}$  is incompressible, i.e.,
 $\mathrm{div}\overrightarrow{V_\varepsilon}^{(0)} = 0$ in $\Omega^{(0)}_\varepsilon.$

\begin{theorem}\label{apriory_estimate}
Let the velocity field $\overrightarrow{V_\varepsilon}^{(0)}$ be  conservative  in $\Omega_\varepsilon^{(0)},$
relations \eqref{assum_1} --  \eqref{cond_1} hold, and  functions $F_i \in L^2(\Omega_\varepsilon^{(i)}), \ \Phi_i \in L^2(\Gamma_\varepsilon^{(i)}), \  i\in\{1,2,3\}$. Then for a weak solution to the problem
   \begin{equation}\label{probl+}
\left\{\begin{array}{rcll}
  -  \varepsilon\, \mathrm{div} \big( \mathbb{D}^{(i)}_\varepsilon \nabla \psi_\varepsilon\big) +
  \mathrm{div} \big( \overrightarrow{V_\varepsilon}^{(i)} \, \psi_\varepsilon\big)
  & = & F_i, &
    \text{in} \ \Omega_\varepsilon^{(i)}, \ \   i\in\{1,2,3\},
\\[2mm]
   -  \varepsilon \, \sigma_\varepsilon(\psi_\varepsilon)  + \psi_\varepsilon\,  \overrightarrow{V_\varepsilon}^{(i)} \cdot \boldsymbol{\nu}
  & = & \varepsilon\, \Phi_i &
  \text{on} \  \Gamma_\varepsilon^{(i)}, \ \   i\in\{1,2,3\},
\\[2mm]
-  \varepsilon\, \mathrm{div} \big( \mathbb{D}^{(0)}_\varepsilon \nabla \psi_\varepsilon\big) +
  \mathrm{div} \big(\overrightarrow{V_\varepsilon}^{(0)} \, \psi_\varepsilon\big)  & = & 0, &
    \text{in} \ \Omega_\varepsilon^{(0)},
\\[2mm]
 -  \varepsilon \, \sigma_\varepsilon(\psi_\varepsilon) +  \psi_\varepsilon \, \overrightarrow{V_\varepsilon}^{(0)} \cdot \boldsymbol{\nu}  & = & 0  &
    \text{on} \ \Gamma_\varepsilon^{(0)},
\\[2mm]
 \psi_\varepsilon \big|_{x_i= \ell_i}
 & = & 0, & \text{on} \ \Upsilon_{\varepsilon}^{(i)} (\ell_i) , \ \  i\in\{1,2,3\},
 \\[2mm]
 \psi_\varepsilon\big|_{x_i= \varepsilon \ell_0-0} &=& \psi_\varepsilon\big|_{x_i= \varepsilon \ell_0+0}, &  i\in\{1,2,3\},
 \\[2mm]
 a_{ii}^{(i)} \, \partial_{x_i}\psi_\varepsilon\big|_{x_i= \varepsilon \ell_0+0} &=&
 \sum_{j=1}^{3 } \big(a_{ij}^{(0)} \partial_{x_j} \psi_\varepsilon\big)\big|_{x_i= \varepsilon \ell_0-0}, &
 i\in\{1,2,3\},
\end{array}\right.
\end{equation}
 we have the estimate
\begin{equation}\label{appriory_estimate}
  \|\psi_\varepsilon\|_{H^1(\Omega_\varepsilon)} \le C \varepsilon^{-1} \sum_{i=1}^{3}\Big( \|F_i\|_{L^2(\Omega_\varepsilon^{(i)})} + \sqrt{\varepsilon} \, \|\Phi_i\|_{L^2(\Gamma_\varepsilon^{(i)})} \Big),
\end{equation}
where the constant $C$ does not depend on  $\varepsilon, \, \psi_\varepsilon, \, \{F_i\}, \, \{\Psi_i\}.$
\end{theorem}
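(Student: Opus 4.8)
The plan is to derive the energy estimate by testing the weak formulation of \eqref{probl+} with the solution $\psi_\varepsilon$ itself and carefully tracking the sign of the convective contributions on each piece of $\Omega_\varepsilon$. First I would write down the integral identity obtained by multiplying each equation by $\psi_\varepsilon$ and integrating by parts over $\Omega_\varepsilon^{(i)}$, $i\in\{0,1,2,3\}$; thanks to the conjugation conditions on the disks $\Upsilon_\varepsilon^{(i)}(\varepsilon\ell_0)$ and the Robin-type condition on the lateral surfaces, all internal interface terms cancel and the boundary terms that survive are those on the bases $\Upsilon_\varepsilon^{(i)}(\ell_i)$ (where $\psi_\varepsilon=0$, so they vanish) and the prescribed $\varepsilon\Phi_i$ on $\Gamma_\varepsilon^{(i)}$. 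This produces
\begin{equation*}
  \varepsilon\int_{\Omega_\varepsilon}\nabla\psi_\varepsilon\cdot(\mathbb{D}_\varepsilon\nabla\psi_\varepsilon)\,dx
  - \int_{\Omega_\varepsilon}\psi_\varepsilon\,\overrightarrow{V_\varepsilon}\cdot\nabla\psi_\varepsilon\,dx
  = \sum_{i=1}^3\int_{\Omega_\varepsilon^{(i)}}F_i\,\psi_\varepsilon\,dx
  - \varepsilon\sum_{i=1}^3\int_{\Gamma_\varepsilon^{(i)}}\Phi_i\,\psi_\varepsilon\,dS_x .
\end{equation*}

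The core of the argument is to show the convective term on the left is nonnegative up to controllable remainders. In the node $\Omega_\varepsilon^{(0)}$ the field is incompressible (div $\overrightarrow{V_\varepsilon}^{(0)}=0$) and $\psi_\varepsilon$ is continuous across $\partial\Omega_\varepsilon^{(0)}$, so $\int_{\Omega_\varepsilon^{(0)}}\psi_\varepsilon\,\overrightarrow{V_\varepsilon}^{(0)}\cdot\nabla\psi_\varepsilon\,dx=\tfrac12\int_{\partial\Omega_\varepsilon^{(0)}}\psi_\varepsilon^2\,\overrightarrow{V_\varepsilon}^{(0)}\cdot\boldsymbol\nu\,dS_x$, which exactly matches the corresponding flux contributions coming from the three cylinders at $x_i=\varepsilon\ell_0$ (using \eqref{v_i}), so these cross-terms cancel. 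Inside each cylinder $\Omega_\varepsilon^{(i)}$, integrating by parts in the dominant longitudinal direction gives $-\int_{\Omega_\varepsilon^{(i)}}\psi_\varepsilon\,\overrightarrow{V_\varepsilon}^{(i)}\cdot\nabla\psi_\varepsilon\,dx = \tfrac12\int_{\Omega_\varepsilon^{(i)}}(\mathrm{div}\,\overrightarrow{V_\varepsilon}^{(i)})\psi_\varepsilon^2\,dx + (\text{boundary})$; the divergence of $\overrightarrow{V_\varepsilon}^{(i)}$ equals $\partial_{x_i}v_i^{(i)}+\varepsilon\,\mathrm{div}_{\overline x_i}\overline V^{(i)}$, and by assumption \eqref{assum_1} the leading piece $\partial_{x_i}v_i^{(i)}\ge0$, so it contributes with a good sign, while the $O(\varepsilon)$ transverse piece and any leftover boundary terms on $\Gamma_\varepsilon^{(i)}$ (where the Robin condition converts $\psi_\varepsilon\,\overrightarrow{V_\varepsilon}\cdot\boldsymbol\nu$ into $\varepsilon\sigma_\varepsilon(\psi_\varepsilon)-\varepsilon\Phi_i$, i.e. is already absorbed) are bounded using the smallness $d_i$ together with a Friedrichs/Poincar\'e inequality in the thin transverse section.

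The Poincar\'e inequality is where the thinness is exploited: for a function vanishing on $\Upsilon_\varepsilon^{(i)}(\ell_i)$ one has $\|\psi_\varepsilon\|_{L^2(\Omega_\varepsilon^{(i)})}\le \ell_i\|\partial_{x_i}\psi_\varepsilon\|_{L^2(\Omega_\varepsilon^{(i)})}$, and on the node one uses the scaled Poincar\'e inequality on $\varepsilon\Xi^{(0)}$ to control $\|\psi_\varepsilon\|_{L^2(\Omega_\varepsilon^{(0)})}$ by $\varepsilon\|\nabla\psi_\varepsilon\|_{L^2(\Omega_\varepsilon^{(0)})}$ plus the trace on the junction disks, which in turn is controlled by the cylinder norms. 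Combining: the left side is bounded below by $\varepsilon\kappa_0\|\nabla\psi_\varepsilon\|_{L^2(\Omega_\varepsilon)}^2$ minus terms of the form $d_i\ell_i\|\partial_{x_i}\psi_\varepsilon\|^2$ (from the transverse convective remainder after Poincar\'e), and assumption \eqref{assum_2}, $\kappa_0^{(i)}-d_i\ell_i>0$, guarantees a net coercivity $\gtrsim\varepsilon\|\nabla\psi_\varepsilon\|_{L^2(\Omega_\varepsilon)}^2$. For the right side I would estimate $|\sum_i\int F_i\psi_\varepsilon|\le\sum_i\|F_i\|_{L^2}\|\psi_\varepsilon\|_{L^2(\Omega_\varepsilon^{(i)})}\le C\sum_i\|F_i\|_{L^2}\|\nabla\psi_\varepsilon\|_{L^2}$ (Poincar\'e again, constant independent of $\varepsilon$ since $\ell_i\ge1$ is fixed), and similarly for the surface term $\varepsilon|\sum_i\int_{\Gamma_\varepsilon^{(i)}}\Phi_i\psi_\varepsilon|\le C\varepsilon\sum_i\|\Phi_i\|_{L^2(\Gamma_\varepsilon^{(i)})}\|\psi_\varepsilon\|_{L^2(\Gamma_\varepsilon^{(i)})}$, with the trace inequality on the thin lateral surface giving $\|\psi_\varepsilon\|_{L^2(\Gamma_\varepsilon^{(i)})}\le C\varepsilon^{-1/2}(\|\psi_\varepsilon\|_{L^2(\Omega_\varepsilon^{(i)})}+\varepsilon\|\nabla\psi_\varepsilon\|_{L^2(\Omega_\varepsilon^{(i)})})$. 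Putting $Y:=\|\nabla\psi_\varepsilon\|_{L^2(\Omega_\varepsilon)}$, Young's inequality yields $\varepsilon Y^2\le C(\sum\|F_i\|_{L^2}+\sqrt\varepsilon\sum\|\Phi_i\|_{L^2(\Gamma_\varepsilon^{(i)})})\,Y$, hence $Y\le C\varepsilon^{-1}(\sum\|F_i\|_{L^2}+\sqrt\varepsilon\sum\|\Phi_i\|_{L^2(\Gamma_\varepsilon^{(i)})})$; a final application of Poincar\'e upgrades this to the full $H^1$ norm, which is \eqref{appriory_estimate}.

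I expect the main obstacle to be the bookkeeping of all boundary and interface contributions so that the convective terms telescope correctly — in particular verifying that the node flux $\tfrac12\int_{\partial\Omega_\varepsilon^{(0)}}\psi_\varepsilon^2\,\overrightarrow{V_\varepsilon}^{(0)}\cdot\boldsymbol\nu$ cancels against the three cylinder contributions at $x_i=\varepsilon\ell_0$ (this is what \eqref{v_i} and \eqref{cond_1} are for, the latter ensuring the cut-off–generated source $f_j$ integrates to zero is not needed here but the flux balance $\sum h_i^2\mathrm v_i=0$ enters through consistency), and making sure the $\varepsilon$-powers are tracked correctly through the thin-domain trace and Poincar\'e constants so the claimed $\varepsilon^{-1}$ and $\sqrt\varepsilon$ scalings come out exactly right rather than with a worse power.
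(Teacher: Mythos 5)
Your proposal is correct and follows essentially the same route as the paper: test with $\psi_\varepsilon$, split the convective integral over the node and the three cylinders, use the conservative/incompressible structure of $\overrightarrow{V_\varepsilon}^{(0)}$ to reduce the node contribution to fluxes on the junction disks that cancel against the longitudinal integration by parts in the cylinders, exploit \eqref{assum_1} for the sign of $\partial_{x_i}v_i^{(i)}$, absorb the $O(\varepsilon)$ transverse convective remainder via the Poincar\'e inequality and \eqref{assum_2}, and close with the thin-domain trace inequality and a final Poincar\'e step. The only cosmetic difference is that the paper bounds the transverse convective term directly by $\varepsilon d_i\int|\psi_\varepsilon||\nabla_{\bar x_i}\psi_\varepsilon|$ rather than passing through $\mathrm{div}_{\bar x_i}\overline V^{(i)}$, which changes nothing in the outcome.
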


\begin{proof}
 Multiplying the differential equations of the  problem \eqref{probl+} by  $\psi_\varepsilon$ and integrating by parts, we get
 \begin{equation*}
  \varepsilon\int \limits_{\Omega_\varepsilon}
    (\mathbb{D}_\varepsilon {\nabla \psi_\varepsilon}) \cdot {\nabla \psi_\varepsilon} \, dx
  - \int \limits_{\Omega_\varepsilon} \psi_\varepsilon\,
    \overrightarrow{V_\varepsilon} \cdot
    {\nabla \psi_\varepsilon} \, dx
  = \sum\limits_{i=1}^{3} \
    \int \limits_{\Omega_\varepsilon^{(i)}} F_i \, \psi_\varepsilon \, dx
 -  \varepsilon \sum \limits_{i=1}^{3} \
    \int \limits_{\Gamma_\varepsilon^{(i)}} \Phi_i \, \psi_\varepsilon \, dS_x.
   \end{equation*}
In view of the ellipticity conditions \eqref{n1} and the Cauchy-Bunyakovsky-Schwarz inequality, we have
\begin{equation*}
\varepsilon \sum_{i=0}^{3} \kappa_0^{(i)} \| {\nabla \psi_\varepsilon}\|^2_{L^2(\Omega^{(i)}_\varepsilon)}
  - \int \limits_{\Omega_\varepsilon} \psi_\varepsilon\,
    \overrightarrow{V_\varepsilon} \cdot
    {\nabla \psi_\varepsilon} \, dx \le
   \sum_{i=1}^{3} \|F_i\|_{L^2(\Omega_\varepsilon^{(i)})} \|\psi_\varepsilon \|_{L^2(\Omega_\varepsilon^{(i)})}
  + \varepsilon
  \sum_{i=1}^{3} \|\Phi_i\|_{L^2(\Gamma_\varepsilon^{(i)})}  \|\psi_\varepsilon\|_{L^2(\Gamma_\varepsilon^{(i)})}.
   \end{equation*}
Due to the uniform Dirichlet conditions on the bases $\{\Upsilon_{\varepsilon}^{(i)}(\ell_i)\}_{i=1}^3$ we deduce that
\begin{equation}\label{p_0}
\|\psi_\varepsilon \|_{L^2(\Omega_\varepsilon^{(i)})} \le \ell_i \|\nabla \psi_\varepsilon \|_{L^2(\Omega_\varepsilon^{(i)})},
\end{equation}
and with the help of the inequality
$$
 \varepsilon \int_{\Gamma_\varepsilon^{(i)}} v^2 \, d\sigma_x
 \leq C_2 \Bigg( \varepsilon^2 \int_{\Omega_\varepsilon^{(i)}} |\nabla_{x}v|^2 \, dx
 +    \int_{\Omega_\varepsilon^{(i)}} v^2 \, dx \Bigg), \ \ (i\in \{1, 2, 3\})
$$
 proved in \cite{M-MMAS-2008}, we obtain
\begin{equation}\label{p_1}
\varepsilon \sum_{i=0}^{3} \kappa_0^{(i)} \| {\nabla \psi_\varepsilon}\|^2_{L^2(\Omega^{(i)}_\varepsilon)}
  - \int \limits_{\Omega_\varepsilon} \psi_\varepsilon\,
    \overrightarrow{V_\varepsilon} \cdot
    {\nabla \psi_\varepsilon} \, dx \le
    C \sum_{i=1}^{3}\Big( \|F_i\|_{L^2(\Omega_\varepsilon^{(i)})} + \sqrt{\varepsilon}\|\Phi_i\|_{L^2(\Gamma_\varepsilon^{(i)})} \Big)  \| {\nabla \psi_\varepsilon}\|_{L^2(\Omega_\varepsilon)}.
\end{equation}

 Now let's tackle the integral in the left-hand side of \eqref{p_1}. It is equal to the sum of four integrals over  the respective parts of the domain $\Omega_\varepsilon.$ Using the representation \eqref{field_0},
 \begin{multline}\label{p_2}
  - \int_{\Omega^{(0)}_\varepsilon} \psi_\varepsilon\,
    \overrightarrow{V_\varepsilon}^{(0)} \cdot
    {\nabla \psi_\varepsilon} \, dx = - \frac{\varepsilon}{2} \int_{\Omega^{(0)}_\varepsilon}     \nabla_x \big( p(\frac{x}{\varepsilon}) \big)  \cdot {\nabla\big(\psi^2_\varepsilon}\big) \, dx
    \\
    =
     - \frac{\varepsilon}{2} \int_{\Omega^{(0)}_\varepsilon}  \mathrm{div}\Big(\psi^2_\varepsilon\,  \nabla_x \big( p(\frac{x}{\varepsilon}) \big)\Big)  dx = - \sum_{i=1}^{3}  \frac{\mathrm{v}_i}{2}
     \int_{\Upsilon_{\varepsilon}^{(i)} (\varepsilon\ell_0)}\psi^2_\varepsilon\, d\bar{x}_i.
 \end{multline}
 Now, for definiteness, consider the integral over the first cylinder and make the following calculations:
 \begin{multline}\label{p_3}
  - \int_{\Omega^{(1)}_\varepsilon} \psi_\varepsilon\,
    \overrightarrow{V_\varepsilon}^{(1)} \cdot
    {\nabla \psi_\varepsilon} \, dx = - \frac{1}{2} \int_{\Omega^{(1)}_\varepsilon} v_1^{(1)}(x_1) \partial_{x_1}(\psi^2_\varepsilon) \, dx
    - \varepsilon \int_{\Omega^{(1)}_\varepsilon} \psi_\varepsilon \big(
    v^{(1)}_2(x_1, \tfrac{\overline{x}_1}{\varepsilon}),  v^{(1)}_3(x_1,\tfrac{\overline{x}_1}{\varepsilon})\big) \cdot \nabla_{\overline{x}_1}
    \psi_\varepsilon\, dx
    \\
    = \frac{\mathrm{v}_1}{2}
     \int_{\Upsilon_{\varepsilon}^{(1)} (\varepsilon\ell_0)}\psi^2_\varepsilon\, d\bar{x}_1 +
     \frac{1}{2}
     \int_{\Omega^{(1)}_\varepsilon} \psi^2_\varepsilon\, \partial_{x_1}(v_1^{(1)}(x_1))\,  dx - \varepsilon \int_{\Omega^{(1)}_\varepsilon} \psi_\varepsilon \big( v^{(1)}_2(x_1, \tfrac{\overline{x}_1}{\varepsilon}),  v^{(1)}_3(x_1,\tfrac{\overline{x}_1}{\varepsilon})\big) \cdot \nabla_{\overline{x}_1}
    \psi_\varepsilon\, dx
  \end{multline}
Summing all these integral and taking \eqref{assum_1} into account, the inequality \eqref{p_1} can be rewritten as follows
 \begin{equation*}
\varepsilon \sum_{i=0}^{3} \kappa_0^{(i)} \| {\nabla \psi_\varepsilon}\|^2_{L^2(\Omega^{(i)}_\varepsilon)}
  - \varepsilon \sum_{i=1}^{3} d_i \int_{\Omega^{(i)}_\varepsilon} |\psi_\varepsilon| \,
    |\nabla_{\bar{x}_i} \psi_\varepsilon|\, dx \le
    C \sum_{i=1}^{3}\Big( \|F_i\|_{L^2(\Omega_\varepsilon^{(i)})} + \sqrt{\varepsilon}\|\Phi_i\|_{L^2(\Gamma_\varepsilon^{(i)})} \Big)  \| {\nabla \psi_\varepsilon}\|_{L^2(\Omega_\varepsilon)}.
\end{equation*}
whence, using the inequalities \eqref{p_0}, we obtain
 \begin{equation}\label{p_4}
 \kappa_0^{(0)} \| {\nabla \psi_\varepsilon}\|_{L^2(\Omega^{(0)}_\varepsilon)} +
  \sum_{i=1}^{3} (\kappa_0^{(i)} - d_i \ell_i) \| {\nabla \psi_\varepsilon}\|_{L^2(\Omega^{(i)}_\varepsilon)}
   \le
    C \varepsilon^{-1} \sum_{i=1}^{3}\Big( \|F_i\|_{L^2(\Omega_\varepsilon^{(i)})} + \sqrt{\varepsilon}\|\Phi_i\|_{L^2(\Gamma_\varepsilon^{(i)})} \Big).
\end{equation}

 To complete the proof, we need to recall the assumption \eqref{assum_2} and the inequality
\begin{equation}\label{est4}
\int_{\Omega_\varepsilon} v^2 \, dx \le C  \int_{\Omega_\varepsilon} |\nabla_{x}v|^2 \, dx \quad \forall \, v \in
H^1(\Omega_\varepsilon), \ \ v\Big|_{\{\Upsilon_{\varepsilon}^{(i)}(\ell_i)\}_{i=1}^3} = 0,
\end{equation}
which was proved in \cite{M-AA-2021}.
\end{proof}

\begin{remark}
Hereinafter, all constants in inequalities are independent of the parameter~$\varepsilon.$
\end{remark}

\begin{theorem}\label{mainTheorem}
Series $(\ref{asymp_expansion})$ is the asymptotic expansion for the solution to the  problem $(\ref{probl})$ in the Sobolev space $H^1(\Omega_\varepsilon),$ i.e.,
\begin{equation}\label{main_estimate}
    \forall \, m \in\Bbb{N} \ \
    \exists \, {C}_m >0 \ \
    \exists \, \varepsilon_0>0 \ \
    \forall\, \varepsilon\in(0, \varepsilon_0) : \qquad
    \| \, u_\varepsilon - U_\varepsilon^{(m)} \|_{H^1(\Omega_\varepsilon)}
    \leq
    {C}_m \ \varepsilon^{m},
\end{equation}
where
\begin{equation}\label{aaN}
U^{(m)}_{\varepsilon} (x)
  = \sum\limits_{k=0}^{m} \varepsilon^{k}
    \Big(
        \overline{u  }_k (x, \,\varepsilon)
      + \overline{N  }_k (x, \,\varepsilon)
      + \overline{\Pi}_k (x, \,\varepsilon)
    \Big),
\quad x\in\Omega_\varepsilon,
\end{equation}
is the partial sum of $(\ref{asymp_expansion}).$
\end{theorem}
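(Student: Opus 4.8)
The standard strategy for justifying a multiscale expansion is to substitute the partial sum $U^{(m)}_\varepsilon$ into problem \eqref{probl}, compute the residuals it leaves in the differential equations, the Neumann-type boundary conditions on the lateral surfaces, and the conjugation conditions on the disks $\Upsilon_\varepsilon^{(i)}(\varepsilon\ell_0)$, and then apply the a priori estimate of Theorem~\ref{apriory_estimate} to the difference $\psi_\varepsilon := u_\varepsilon - U^{(m)}_\varepsilon$. Since each coefficient of the regular, inner, and boundary-layer series was constructed precisely so as to cancel the terms of the corresponding order in the formal expansions, the residuals generated by the first $m{+}1$ terms must be of order $\varepsilon^{m+1}$ (or better), and then the factor $\varepsilon^{-1}$ in \eqref{appriory_estimate} yields the claimed $\mathcal{O}(\varepsilon^m)$ bound.

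\textbf{Step 1: assemble the discrepancy problem.} First I would write down the boundary-value problem satisfied by $\psi_\varepsilon = u_\varepsilon - U^{(m)}_\varepsilon$. Because $u_\varepsilon$ solves \eqref{probl} and $U^{(m)}_\varepsilon$ is a finite sum, $\psi_\varepsilon$ solves a problem of the form \eqref{probl+} with right-hand sides $F_i^{(m)}$ in $\Omega_\varepsilon^{(i)}$, boundary data $\varepsilon\Phi_i^{(m)}$ on $\Gamma_\varepsilon^{(i)}$, and (this is the point where care is needed) nonzero data in the conjugation conditions and on $\Upsilon_\varepsilon^{(i)}(\ell_i)$. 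The Dirichlet data on the bases: on $\Upsilon_\varepsilon^{(1,2)}(\ell_i)$ the mismatch is $\mathcal{O}(\varepsilon^{m+1})$ because $w_k^{(i)}(\ell_i)=0$ for $k\ge1$ and $w_0^{(i)}(\ell_i)=q_i$, while the inner terms $N_k$ are exponentially small there; on $\Upsilon_\varepsilon^{(3)}(\ell_3)$ the boundary-layer terms $\Pi_k^{(3)}$ were chosen exactly so that $\Phi_k = -w_k^{(3)}(\ell_3)$ (resp.\ $q_3 - w_0^{(3)}(\ell_3)$) is absorbed, again up to $\mathcal{O}(\varepsilon^{m+1})$ from the truncation. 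One must then correct $U^{(m)}_\varepsilon$ by a smooth function (a harmonic or linear lift of these exponentially-small Dirichlet mismatches) so that the difference lies in $H^1(\Omega_\varepsilon;\Upsilon_\varepsilon)$ and Theorem~\ref{apriory_estimate} applies verbatim; this lift contributes only $\mathcal{O}(\varepsilon^{m+1})$-small perturbations to $F_i$ and $\Phi_i$.

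\textbf{Step 2: estimate the residuals.} The heart of the argument is to show $\|F_i^{(m)}\|_{L^2(\Omega_\varepsilon^{(i)})} + \sqrt\varepsilon\,\|\Phi_i^{(m)}\|_{L^2(\Gamma_\varepsilon^{(i)})} = \mathcal{O}(\varepsilon^{m+1})$, together with the corresponding bounds for the conjugation-condition residuals on $\Upsilon_\varepsilon^{(i)}(\varepsilon\ell_0)$. Here one splits $\Omega_\varepsilon$ into the overlap regions where the cut-offs $\chi^{(i)}_{\delta,0}$, $\chi^{(3)}_\delta$ have nonzero gradient and the regions where they are locally constant. In the locally-constant regions the residual equals the first neglected term of the relevant series (order $\varepsilon^{m+1}$ by the recurrence relations \eqref{rel_1}--\eqref{rel_2}, \eqref{prim+probl}, and the analogous inner relations \eqref{N_k_prob}), with $L^2(\Omega_\varepsilon^{(i)})$-norm scaling like $\varepsilon^{m+1}\cdot|\Omega_\varepsilon^{(i)}|^{1/2}$, which is harmless. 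In the overlap (matching) zones, commutators $[\Delta,\chi]$ and $[\mathrm{div}(\overrightarrow V\,\cdot),\chi]$ appear, and one must exploit the matching of the regular and inner parts: the difference $N_k(x/\varepsilon) - w_k^{(j)}(0) - (\text{Taylor-type terms})$ is exponentially small, $\mathcal{O}(e^{-c\delta/\varepsilon})$, by the exponential stabilization established in the Lemma of \S\ref{subsec_Inner_part}; likewise the boundary-layer terms $\Pi_k^{(3)}$ decay like $e^{-c\delta/\varepsilon}$ on the support of $\chi^{(3)}_\delta{}'$ by \eqref{view_solution}. Thus the matching-zone residuals are exponentially small and negligible. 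One also uses Remark~\ref{r_2_1} (compact supports of $\varphi^{(i)}$ near the origin, constancy of $v_i^{(i)}$) to guarantee the inner problems \eqref{N_k_prob} really do have the right form.

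\textbf{Step 3: conclude and address the main obstacle.} Once the residuals are bounded by $C_m\varepsilon^{m+1}$, Theorem~\ref{apriory_estimate} gives $\|\psi_\varepsilon\|_{H^1(\Omega_\varepsilon)} \le C\varepsilon^{-1}\cdot C_m\varepsilon^{m+1} = C_m'\varepsilon^{m}$; then, undoing the Dirichlet lift (which is itself $\mathcal{O}(\varepsilon^{m+1})$ in $H^1$), one arrives at \eqref{main_estimate}. To obtain the result for every $m$ one can, as is customary, first prove it for $m$ replaced by $m{+}1$ (or $m{+}2$), using more terms of the expansion, and then discard the last term or two, which are $\mathcal{O}(\varepsilon^{m+1})$ in $H^1$; this circumvents the slight loss in the $\varepsilon^{-1}$ of the a priori estimate. \emph{The main obstacle} I anticipate is the careful bookkeeping in the matching zones near the node: one must verify that the chosen coefficients $\{w_k^{(j)}(0)\}$, fixed through the solvability conditions \eqref{cong_cond}, make the regular and inner expansions agree up to exponentially small terms in the overlap $\{\varepsilon(\ell_0+1) < x_j < \varepsilon(\ell_0+2)\}\cup\{\delta < x_j < 2\delta\}$ — equivalently, that the cut-off commutators acting on $N_k(x/\varepsilon) - \sum_j w_k^{(j)}(0)\chi_{\ell_0}(x_j/\varepsilon)$ produce only $\mathcal{O}(e^{-c\delta/\varepsilon})$ contributions — and that the conormal-derivative jump in the conjugation conditions is likewise controlled. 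Everything else (the interior residuals, the lateral-surface residuals, the boundary layer) follows routinely from the recurrence relations and the exponential decay already established.
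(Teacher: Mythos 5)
Your proposal is correct and follows essentially the same route as the paper: substitute the partial sum, collect the residuals (interior ones of pointwise order $\varepsilon^{m}$, hence $\mathcal{O}(\varepsilon^{m+1})$ in $L^2$ thanks to the $\mathcal{O}(\varepsilon^2)$ cross-sectional area; cut-off commutator terms exponentially small by the stabilization of $N_k$ and the decay of $\Pi_k^{(3)}$; lateral-surface residuals $\mathcal{O}(\varepsilon^{m+1})$ pointwise), and then apply the a priori estimate of Theorem~\ref{apriory_estimate} to $u_\varepsilon - U^{(m)}_\varepsilon$. The only simplification you overlook is that no Dirichlet lift and no control of conjugation-condition jumps are needed: by construction the cut-offs, the exact boundary values $\Pi_k^{(3)}(\cdot,0)=\Phi_k$, the relations $w_0^{(i)}(\ell_i)=q_i$ and $w_k^{(i)}(\ell_i)=0$ for $k\ge 1$, the vanishing of $u_k^{(i)}$ near the bases, and the fact that only the $\overline{N}_k$ terms (which satisfy \eqref{N_k_prob}) are active near the node make the Dirichlet and transmission conditions hold exactly, so the difference satisfies \eqref{probl+} verbatim with $F_i=R^{(m)}_{\varepsilon,(i)}$ and $\varepsilon\Phi_i=\breve{R}^{(m)}_{\varepsilon,(i)}$.
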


\begin{proof}
Take an arbitrary $m\in\mathbb{N}$.
Substituting the partial sum~\eqref{aaN} in the equations and the boundary conditions of problem~\eqref{probl} and taking into account relations \eqref{regul_probl_2}, \eqref{regul_probl_3}, \eqref{N_k_prob}, \eqref{prim+probl} for the coefficients of series~\eqref{asymp_expansion}, we find
\begin{equation*}
\begin{array}{rl}
    \varepsilon\, \mathrm{div}
    \big( \mathbb{D}^{(i)}_\varepsilon \nabla U_\varepsilon^{(m)} \big)
  - \mathrm{div}
    \big( \overrightarrow{V_\varepsilon}^{(i)} U_\varepsilon^{(m)} \big)
    \ = \
    R^{(m)}_{\varepsilon, j, (i)}
 =: R^{(m)}_{\varepsilon, (i)} &
    \text{in} \ \Omega_\varepsilon^{(i)}, \ \ i \in \{1, 2, 3\},
\\[2mm]
    \varepsilon \, \sigma_\varepsilon(U_\varepsilon^{(m)})
  - \overrightarrow{V_\varepsilon}^{(i)} \cdot \boldsymbol{\nu} \,
    U_\varepsilon^{(m)}
  + \varepsilon\, \varphi^{(i)}
    \ = \
    \breve{R}_{\varepsilon, 4, (i)}^{(m)}
 =: \breve{R}_{\varepsilon, (i)}^{(m)} &
    \text{on} \ \Gamma_\varepsilon^{(i)}, \ \ i \in \{1, 2, 3\},
\\[2mm]
    \varepsilon\, \mathrm{div}
    \big( \mathbb{D}^{(0)}_\varepsilon \nabla U_\varepsilon^{(m)} \big)
  - \mathrm{div}
    \big( \overrightarrow{V_\varepsilon}^{(0)} U_\varepsilon^{(m)} \big)
    \ = \ 0 &
    \text{in} \ \Omega_\varepsilon^{(0)},
\\[2mm]
    \varepsilon \, \sigma_\varepsilon(U_\varepsilon^{(m)})
  - \overrightarrow{V_\varepsilon}^{(0)} \cdot \boldsymbol{\nu} \,
    U_\varepsilon^{(m)}
    \ = \ 0 &
    \text{on} \ \Gamma_\varepsilon^{(0)},
\\[2mm]
  - U_\varepsilon^{(m)} \big|_{x_i = \ell_i}
  + q_i
    \ = \ 0 &
    \text{on} \ \Upsilon_{\varepsilon}^{(i)} (\ell_i) , \ \  i \in \{1, 2, 3\},
\\[2mm]
  - U_\varepsilon^{(m)} \big|_{x_i = \varepsilon \ell_0-0}
  + U_\varepsilon^{(m)} \big|_{x_i = \varepsilon \ell_0+0}
    \ = \ 0, & i \in \{1, 2, 3\},
\\[2mm]
  - a_{ii}^{(i)} \,
    \partial_{x_i} U_\varepsilon^{(m)} \big|_{x_i = \varepsilon \ell_0+0}
  + \sum_{j=1}^{3}
    \big(a_{ij}^{(0)}(\frac{x}{\varepsilon})
        \partial_{x_j} U_\varepsilon^{(m)}
    \big)
    \big|_{x_i = \varepsilon \ell_0-0}
    \ = \ 0, & i = 1, 2, 3,
\end{array}
\end{equation*}
where
\begin{multline}\label{R1}
R^{(m)}_{\varepsilon, 1, (i)}(x)
  = \varepsilon^{m} \
    \chi_{\delta, 0}^{(i)} (x_i)
    \Bigg(
        a_{ii}^{(i)} \,
        \dfrac{\partial^{\,2} u_{m-1}^{(i)}}{\partial^{\,2} {x}_i}
        \big(x_i, \bar{\xi}_i\big)
      - \dfrac{\partial}{\partial{x}_i}
        \Big(
            v_{i}^{(i)}(x_i)\, u_{m}^{(i)}\big(x_i, \bar{\xi}_i\big)
        \Big)
\\
      - \mathrm{div}_{\bar{\xi}_i}
        \bigg(
            \overline{V}^{(i)}(x_i, \bar{\xi}_i) \,
            \Big(
                w^{(i)}_{m}(x_i) + u^{(i)}_{m}(x_i, \bar{\xi}_i)
            \Big)
        \bigg)
      + \varepsilon \, a_{ii}^{(i)} \,
        \dfrac{\partial^{\,2}}{\partial^{\,2}{x}_i}
        \Big(
            w^{(i)}_{m}(x_i) + u^{(i)}_{m}\big(x_i, \bar{\xi}_i\big)
        \Big)
    \Bigg)
    \Bigg|_{\bar{\xi}_i = \frac{\overline{x}_i}{\varepsilon}},
\end{multline}
\begin{multline}\label{R2}
R^{(m)}_{\varepsilon, 2, (i)}(x)
  = \sum\limits_{k=0}^{m}\varepsilon^k
    \Bigg(
        \frac{d\chi_{\delta, 0}^{(i)}}{dx_i} (x_i)
        \bigg(
            2 \, a_{ii}^{(i)} \frac{\partial{N}_{k}}{\partial{\xi_i}}(\xi)
          - \mathrm{v}_i \, \big( N_{k}(\xi) - w_k^{(i)}(0) \big)
        \bigg)
\\
      - \varepsilon
       \frac{d^2\chi_{\delta, 0}^{(i)}}{dx_i^2} (x_i) \,
        a_{ii}^{(i)} \big( N_{k}(\xi) - w_k^{(i)}(0) \big)
    \Bigg)
    \Bigg|_{
        \zeta_i = \frac{x_i}{\varepsilon^{\alpha}}, \,
        \xi = \frac{x}{\varepsilon}
    },
\end{multline}
\begin{multline}\label{R3}
R^{(m)}_{\varepsilon, 3, (1)} \equiv R^{(m)}_{\varepsilon, 3, (2)} \equiv 0,
\quad
R^{(m)}_{\varepsilon, 3, (3)}(x)
  = \sum\limits_{k=0}^{m} \varepsilon^{k}
    \Bigg(
      - \frac{d\chi_\delta^{(3)}}{dx_i} (x_i) \,
        \bigg(
            2 \, a_{33}^{(3)} \,
            \frac{\partial\Pi_{k}^{(3)}}{\partial{\xi}_3} (\xi)
          + v_3^{(3)}(\ell_3) \, \Pi_{k}^{(3)} (\xi)
        \bigg)
\\
      + \varepsilon \frac{d^2\chi_\delta^{(3)}}{dx_i^2} (x_i) \
        a_{33}^{(3)} \, \Pi_{k}^{(3)} (\xi)
    \Bigg)
    \Bigg|_{
        \xi_3 = \frac{\ell_3 - x_3}{\varepsilon}, \,
        \bar{\xi}_3 = \frac{\overline{x}_3}{\varepsilon}
    },
\end{multline}
\begin{equation}\label{R4}
\breve{R}_{\varepsilon, 4, (i)}^{(m)}(x)
 =- \varepsilon^{m + 1}
    \chi_{\delta, 0}^{(i)} (x_i)
    \Big(
        \big[w^{(i)}_m(x_i) + u^{(i)}_{m}(x_i, \bar{\xi}_i)\big] \, \overline{V}^{(i)}(x_i, \bar{\xi}_i) \cdot \bar{\nu}_{\bar{\xi}_i}
    \Big)
    \Big|_{\bar{\xi}_i = \frac{\overline{x}_i}{\varepsilon}}, \ \ i \in \{1, 2, 3\}.
\end{equation}

From~\eqref{R1} we conclude that
\begin{equation}\label{R1_estimate}
\exists\, \check{C}_m > 0  \ \ \exists \, \varepsilon_0 > 0 \ \
\forall\, \varepsilon\in (0, \varepsilon_0) : \quad
    \sup\limits_{x\in\Omega_{\varepsilon}}
    \left| R_{\varepsilon, 1, (i)}^{(m)}(x) \right|
    \leq
    \check{C}_m \varepsilon^{m}, \quad i \in \{1, 2, 3\}.
\end{equation}

Due to the functions
$\{ {N}_{k} - {w}_{k}^{(i)}(0), \ \Pi^{(i)}_{k} \}_{i=1}^{3}$
tend to zero as
$\xi_i \to +\infty, \, i\in\{1, 2, 3\}$
(see~\eqref{view_solution}) and the fact that the support of the derivatives of cut-off function
$\chi_{\delta, 0}^{(i)} (x_i)$
belongs to the set
$\{x_i: \varepsilon {\ell_0} + \delta \le
x_i \le \varepsilon {\ell_0} + 2 \delta\},$
we arrive that
\begin{equation}\label{R2_estimate}
    \sup\limits_{x\in\Omega_{\varepsilon}}
    \left| R_{\varepsilon, 2, (i)}^{(m)}(x) \right|
    \leq
    \check{C}_m
    \exp{ \left(
      - \frac{\varepsilon\ell_0 + \delta}{\varepsilon} \, \gamma_i
    \right) }, \quad i \in \{1, 2, 3\},
\end{equation}
similarly we obtain that
\begin{equation}\label{R3_estimate}
    \sup\limits_{x\in\Omega_{\varepsilon}}
    \left| R_{\varepsilon, 3, (3)}^{(m)}(x) \right|
    \leq
    \check{C}_m
    \exp{ \left(
       - \frac{\delta}{\varepsilon} \,
         \frac{v_3^{(3)}(\ell_3)}{a_{33}^{(3)}}
    \right) }.
\end{equation}

It follows from~(\ref{R4}) that there exist positive constants $\overline{C}_m$ and $\overline{\varepsilon}_0$ such that
\begin{equation}\label{R4_estimate}
\forall \, \varepsilon\in(0, \overline{\varepsilon}_0): \quad
    \sup\limits_{x\in\Gamma_\varepsilon^{(i)}}
    \left| \breve{R}_{\varepsilon, 4, (i)}^{(m)}(x) \right|
    \leq
    \overline{C}_m\varepsilon^{m + 1}, \quad i \in \{1, 2, 3\}.
\end{equation}

Using estimates~(\ref{R1_estimate})~--~(\ref{R4_estimate}) we obtain the following estimates:
\begin{gather}
\label{R1_norm_estimate}
    \left\| R_{\varepsilon, 1, (i)}^{(m)} \right\|_{L^2 (\Omega_\varepsilon)}
    \leq
    \check{C}_m \sqrt{\pi \ell_i \ } \ h_i \
    \varepsilon^{m + 1},
\quad i \in \{1, 2, 3\},
\\
\label{R2_norm_estimate}
    \left\| R_{\varepsilon, 2, (i)}^{(m)} \right\|_{L^2 (\Omega_\varepsilon)}
    \leq
    \check{C}_m \sqrt{\pi \ } \ h_i \ \delta^{\frac{1}{2}} \
    \exp{\left(
      - \frac{\varepsilon\ell_0 + \delta}{\varepsilon} \, \gamma_i
    \right)},
\quad i \in \{1, 2, 3\},
\\
\label{R3_norm_estimate}
    \left\| R_{\varepsilon, 3, (3)}^{(m)} \right\|_{L^2 (\Omega_\varepsilon)}
    \leq
    \check{C}_m \sqrt{\pi \ } \ h_i \ \delta^{\frac{1}{2}} \
    \varepsilon
    \exp{\left(
      - \frac{\delta}{\varepsilon} \
        \frac{v_3^{(3)}(\ell_3)}{a_{33}^{(3)}}
    \right)},
\\
\label{R4_norm_estimate}
    \left\|
        \breve{R}_{\varepsilon, 4, (i)}^{(m)}
    \right\|_{L^2 (\Gamma_\varepsilon^{(i)})}
    \leq
    \overline{C}_m \sqrt{2 \pi \ell_i h_i \ } \
    \varepsilon^{m + \frac32},
\quad i \in \{1, 2, 3\},
\end{gather}

Thus, the difference
$W_\varepsilon := u_\varepsilon - U_\varepsilon^{(m)}$
satisfies the following relations:
\begin{equation}\label{Residuals_Difference}
\left\{\begin{array}{rcll}
  - \varepsilon\, \mathrm{div} \big( \mathbb{D}^{(i)}_\varepsilon
    \nabla W_\varepsilon\big)
  + \mathrm{div}
    \big( \overrightarrow{V_\varepsilon}^{(i)} W_\varepsilon \big)
    & = & R^{(m)}_{\varepsilon, (i)} &
    \text{in} \ \Omega_\varepsilon^{(i)}, \ \ i \in \{1, 2, 3\},
\\[2mm]
  - \varepsilon \, \sigma_\varepsilon(W_\varepsilon)
  + \overrightarrow{V_\varepsilon}^{(i)} \cdot
    \boldsymbol{\nu} \, W_\varepsilon
    & = & \breve{R}_{\varepsilon, (i)}^{(m)} &
    \text{on} \ \Gamma_\varepsilon^{(i)}, \ \   i \in \{1, 2, 3\},
\\[2mm]
  - \varepsilon\, \mathrm{div} \big( \mathbb{D}^{(0)}_\varepsilon
    \nabla W_\varepsilon\big)
  + \mathrm{div}
    \big( \overrightarrow{V_\varepsilon}^{(0)} W_\varepsilon \big)
    & = & 0 &
    \text{in} \ \Omega_\varepsilon^{(0)},
\\[2mm]
  - \varepsilon \, \sigma_\varepsilon(W_\varepsilon)
  + \overrightarrow{V_\varepsilon}^{(0)} \cdot
    \boldsymbol{\nu} \, W_\varepsilon
    & = & 0 &
    \text{on} \ \Gamma_\varepsilon^{(0)},
\\[2mm]
    W_\varepsilon \big|_{x_i = \ell_i}
    & = & 0 &
    \text{on} \ \Upsilon_{\varepsilon}^{(i)} (\ell_i) , \ \ i \in \{1, 2, 3\},
 \\[2mm]
    W_\varepsilon\big|_{x_i = \varepsilon \ell_0-0}
    & = & W_\varepsilon\big|_{x_i = \varepsilon \ell_0+0}, & i \in \{1, 2, 3\},
 \\[2mm]
    a_{ii}^{(i)} \,
    \partial_{x_i}W_\varepsilon\big|_{x_i = \varepsilon \ell_0+0}
    & = &
    \sum_{j=1}^{3}
    \big(
        a_{ij}^{(0)}(\frac{x}{\varepsilon}) \partial_{x_j} W_\varepsilon
    \big)
    \big|_{x_i = \varepsilon \ell_0-0}, &
    i \in \{1, 2, 3\}.
\end{array}\right.
\end{equation}
This means that the series~\eqref{asymp_expansion} is a formal asymptotic solution to the problem~\eqref{probl}.
Considering the estimate \eqref{appriory_estimate}, we get  the asymptotic estimate~\eqref{main_estimate} and prove the theorem.
\end{proof}

\begin{corollary}\label{Corollary}
The differences between the solution $u_\varepsilon$ of problem~\eqref{probl} and the partial sum $U_\varepsilon^{(0)}$ $(\text{see}\ ~\eqref{aaN})$ admit the following asymptotic estimates:
\begin{equation}\label{aa1_H1_estimate}
    \| \,
        u_\varepsilon - U_\varepsilon^{(1)}
    \|_{H^1(\Omega_\varepsilon)}
    \leq
    \widetilde{C}_0 \, \varepsilon^{2},
\end{equation}
\begin{equation}\label{aa0_L2_estimate}
    \| \,
        u_\varepsilon - U_\varepsilon^{(0)}
    \|_{L^2(\Omega_\varepsilon)}
    \leq
    \widetilde{C}_0 \, \varepsilon^{2}.
\end{equation}
In thin cylinders
$
\Omega_{\varepsilon,\delta}^{(i)} :=
\Omega_\varepsilon^{(i)} \cap
\big\{
    x\in \Bbb{R}^3 : \ x_i\in I_{\varepsilon, \delta}^{(i)}
\big\}, \ i \in \{1, 2, 3\},
$
the following estimates hold:
\begin{equation}\label{w1_cylinder_H1_estimate}
    \| \,
        u_\varepsilon
        - w_0^{(i)} - \varepsilon w_1^{(i)} - \varepsilon u_1^{(i)}
    \|_{H^1(\Omega_{\varepsilon,\delta}^{(i)})}
    \leq
    \widetilde{C}_1 \, \varepsilon^2, \ \ i \in \{1, 2, 3\},
\end{equation}
in addition,
\begin{equation}\label{w1_segment_H1_estimate}
    \| \,
        E^{(i)}_\varepsilon(u_\varepsilon)
        -
        w_0^{(i)} - \varepsilon w_1^{(i)}
        -
        \varepsilon E^{(i)}_\varepsilon(u_1^{(i)})
    \|_{H^1(I_{\varepsilon, \delta}^{(i)})}
    \leq
    \widetilde{C}_2 \, \varepsilon, \ \ i \in \{1, 2, 3\},
\end{equation}
\begin{equation}\label{w1_segment_C_estimate}
    \max_{x_i \in \overline{I_{\varepsilon, \delta}^{(i)}}}
    | \,
        E^{(i)}_\varepsilon(u_\varepsilon)(x_i)
        -
        w_0^{(i)}(x_i) - \varepsilon w_1^{(i)}(x_i)
        -
        \varepsilon E^{(i)}_\varepsilon(u_1^{(i)})(x_i)
    |
    \leq
    \widetilde{C}_2 \, \varepsilon, \ \ i \in \{1, 2, 3\},
\end{equation}
where
$
I_{\varepsilon, \delta}^{(i)} :=
(\varepsilon \ell_0 + 2 \delta, \ell_i - 2 \delta), \ i \in \{1, 2, 3\}
$
and
\begin{equation*}
    E^{(i)}_\varepsilon(u_\varepsilon)(x_i)
  = \frac{1}{\pi \varepsilon^2\, h_i^2}
    \int_{\Upsilon^{(i)}_\varepsilon}
    u_\varepsilon(x)\, d\overline{x}_i,
\quad i=1,2,3.
\end{equation*}

In the neighbourhood
$
\Omega^{(0)}_{\varepsilon, \delta} :=
\Omega_\varepsilon \cap
\big\{ x\in\Bbb{R}^3 : \ x_i<\ell_0\varepsilon + \delta, \ i=1,2,3 \big\}
$
of the joint, we get estimates
\begin{equation}\label{N1_joint_H1_estimate}
        \| \,
        u_\varepsilon - N_0 - \varepsilon N_1
    \|_{H^1(\Omega^{(0)}_{\varepsilon, \delta})}
    \leq
    \widetilde{C}_3 \, \varepsilon^{\frac52}.
\end{equation}

In the neighbourhood
$
\Omega^{(3)}_{\varepsilon, \delta, \ell_3} :=
\Omega_\varepsilon^{(3)} \cap
\big\{ x\in\Bbb{R}^3 : \ x_3 > \ell_3 - \delta \big\}
$
of the base $\Upsilon_{\varepsilon}^{(3)} (\ell_3),$ we have
\begin{equation}\label{P1_base_H1_estimate}
    \| \,
        u_\varepsilon - w_0^{(3)} - \Pi_0^{(3)}
        -
        \varepsilon w_1^{(3)} - \varepsilon u_1^{(3)} - \varepsilon \Pi_1^{(3)}
    \|_{H^1(\Omega^{(0)}_{\varepsilon, \delta, \ell_3})}
    \leq
    \widetilde{C}_3 \, \varepsilon^{2} \delta^\frac12.
\end{equation}
\end{corollary}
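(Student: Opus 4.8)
The plan is to obtain every estimate of the corollary from Theorem~\ref{mainTheorem}, applied at a suitably large order $m$, by combining the global bound $\|u_\varepsilon-U^{(m)}_\varepsilon\|_{H^1(\Omega_\varepsilon)}\le C_m\varepsilon^m$ with two elementary facts. First, on each subdomain appearing in the statement the cut-off functions $\chi^{(i)}_{\delta,0}$ and $\chi^{(3)}_{\delta}$ are constant, so that the restriction of $U^{(m)}_\varepsilon$ collapses there to exactly the particular approximation written in the corollary (on $\Omega^{(i)}_{\varepsilon,\delta}$: $\chi^{(i)}_{\delta,0}\equiv1$ and $\overline{N}_k\equiv\overline{\Pi}_k\equiv0$; on $\Omega^{(0)}_{\varepsilon,\delta}$: all $\chi^{(i)}_{\delta,0}$ vanish, so $U^{(m)}_\varepsilon=\sum_{k=0}^{m}\varepsilon^kN_k(x/\varepsilon)$; on $\Omega^{(3)}_{\varepsilon,\delta,\ell_3}$: $\chi^{(3)}_{\delta,0}\equiv\chi^{(3)}_{\delta}\equiv1$, $\chi^{(1)}_{\delta,0}\equiv\chi^{(2)}_{\delta,0}\equiv0$, so $U^{(m)}_\varepsilon=\sum_{k=0}^{m}\varepsilon^k(w^{(3)}_k+u^{(3)}_k+\Pi^{(3)}_k)$). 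Second, every building block of the series $(\ref{asymp_expansion})$ has a norm on the thin or shrinking subdomain that I would estimate by rescaling $\xi=x/\varepsilon$, by the thinness $|\Omega^{(i)}_\varepsilon|=\mathcal O(\varepsilon^2)$, by the exponential stabilization of the inner terms $\{N_k\}$ and exponential decay of the boundary-layer terms $\{\Pi^{(3)}_k\}$ proved in \S\ref{subsec_Inner_part} and \S\ref{boundary-layer}, and by the normalization $\langle u^{(i)}_k(x_i,\cdot)\rangle_{\Upsilon_i(x_i)}=0$.

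For $(\ref{aa1_H1_estimate})$ I would take $m=2$ and write $u_\varepsilon-U^{(1)}_\varepsilon=(u_\varepsilon-U^{(2)}_\varepsilon)+\varepsilon^2\big(\overline{u}_2+\overline{N}_2+\overline{\Pi}_2\big)$: the first summand is $\mathcal O(\varepsilon^2)$ in $H^1(\Omega_\varepsilon)$; the regular block $\overline{u}_2$ has $H^1$-norm $\mathcal O(1)$ (its transverse gradient scales like $\varepsilon^{-1}$ but lives on a volume $\mathcal O(\varepsilon^2)$), while $\overline{N}_2$ and $\overline{\Pi}_2$ are even $\mathcal O(\varepsilon^{1/2})$ in $H^1$, because after rescaling $\|\nabla(N_2(x/\varepsilon))\|^2_{L^2}\approx\varepsilon\|\nabla N_2\|^2_{L^2(\Xi)}$, which is finite by the exponential stabilization, and likewise for $\Pi^{(3)}_2$ by exponential decay. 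Then $(\ref{aa0_L2_estimate})$ follows since $U^{(1)}_\varepsilon-U^{(0)}_\varepsilon=\varepsilon\big(\overline{u}_1+\overline{N}_1+\overline{\Pi}_1\big)$ is made of bounded functions on a set of volume $\mathcal O(\varepsilon^2)$, hence has $L^2$-norm $\mathcal O(\varepsilon^2)$. On $\Omega^{(i)}_{\varepsilon,\delta}$ we have $U^{(1)}_\varepsilon=w^{(i)}_0+\varepsilon w^{(i)}_1+\varepsilon u^{(i)}_1$, so $(\ref{w1_cylinder_H1_estimate})$ is the plain restriction of $(\ref{aa1_H1_estimate})$. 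To descend to the one-dimensional statements I would set $v:=u_\varepsilon-w^{(i)}_0-\varepsilon w^{(i)}_1-\varepsilon u^{(i)}_1$; since the cross-section of the cylinder does not depend on $x_i$, $\tfrac{d}{dx_i}E^{(i)}_\varepsilon(v)=E^{(i)}_\varepsilon(\partial_{x_i}v)$, and applying the Cauchy--Bunyakovsky--Schwarz inequality in every cross-section yields $\|E^{(i)}_\varepsilon(v)\|_{H^1(I^{(i)}_{\varepsilon,\delta})}\le(\sqrt{\pi}\,h_i\,\varepsilon)^{-1}\|v\|_{H^1(\Omega^{(i)}_{\varepsilon,\delta})}\le C\varepsilon$ by $(\ref{w1_cylinder_H1_estimate})$. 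Because $E^{(i)}_\varepsilon$ leaves functions of $x_i$ unchanged and $E^{(i)}_\varepsilon(u^{(i)}_1)\equiv0$ (by $\langle u^{(i)}_1(x_i,\cdot)\rangle=0$), this is $(\ref{w1_segment_H1_estimate})$, and $(\ref{w1_segment_C_estimate})$ follows from the one-dimensional embedding $H^1(I^{(i)}_{\varepsilon,\delta})\hookrightarrow C(\overline{I^{(i)}_{\varepsilon,\delta}})$, whose constant is uniform in $\varepsilon$ since $|I^{(i)}_{\varepsilon,\delta}|$ stays between two positive numbers.

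I expect the main obstacle to be the localized bounds $(\ref{N1_joint_H1_estimate})$ and $(\ref{P1_base_H1_estimate})$: a mere restriction of $(\ref{aa1_H1_estimate})$ only yields $\mathcal O(\varepsilon^2)$, so the extra factor $\varepsilon^{1/2}$ (resp. $\delta^{1/2}$) must be extracted from the smallness of the subdomain, which forces one to use Theorem~\ref{mainTheorem} at an order $m$ high enough that the tail $u_\varepsilon-U^{(m)}_\varepsilon$ is negligible on the small set, and then to estimate the remaining explicit terms there. On $\Omega^{(0)}_{\varepsilon,\delta}$, with $m=3$, one gets $u_\varepsilon-N_0-\varepsilon N_1=(u_\varepsilon-U^{(3)}_\varepsilon)+\varepsilon^2N_2(x/\varepsilon)+\varepsilon^3N_3(x/\varepsilon)$, the first term being $\mathcal O(\varepsilon^3)$, and by rescaling $\int_{\Omega^{(0)}_{\varepsilon,\delta}}|\nabla(N_k(x/\varepsilon))|^2\,dx=\varepsilon\int_{\varepsilon^{-1}\Omega^{(0)}_{\varepsilon,\delta}}|\nabla N_k|^2\,d\xi\le\varepsilon\,\|\nabla N_k\|^2_{L^2(\Xi)}<\infty$ (finite since $\nabla N_k$ decays exponentially in every outlet), so the remaining terms are $\mathcal O(\varepsilon^{5/2})$ and $(\ref{N1_joint_H1_estimate})$ follows. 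Analogously, on $\Omega^{(3)}_{\varepsilon,\delta,\ell_3}$ with $m=3$ the left side of $(\ref{P1_base_H1_estimate})$ reduces to $(u_\varepsilon-U^{(3)}_\varepsilon)+\sum_{k=2}^{3}\varepsilon^k(w^{(3)}_k+u^{(3)}_k+\Pi^{(3)}_k)$, and the factor $\delta^{1/2}$ is produced by the longitudinal integration over the interval $(\ell_3-\delta,\ell_3)$ of length $\delta$ when one rescales the transverse variable in $u^{(3)}_k(x_3,\overline{x}_3/\varepsilon)$, namely $\int_{\ell_3-\delta}^{\ell_3}\|\nabla_{\overline{\xi}_3}u^{(3)}_k(x_3,\cdot)\|^2_{L^2(\Upsilon_3(x_3))}\,dx_3\le\delta\sup_{x_3}\|\nabla_{\overline{\xi}_3}u^{(3)}_k(x_3,\cdot)\|^2_{L^2(\Upsilon_3(x_3))}$ (the bound only improving if $u^{(3)}_k$ vanishes near $\ell_3$), while the blocks $w^{(3)}_k$ contribute $\mathcal O(\varepsilon\,\delta^{1/2})$ and the exponentially decaying $\Pi^{(3)}_k$ contribute $\mathcal O(\varepsilon^{1/2})$ in $H^1$; collecting the powers of $\varepsilon$ and $\delta$ yields $(\ref{P1_base_H1_estimate})$.
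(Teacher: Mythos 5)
Your proposal is correct and follows essentially the same route as the paper: each localized estimate is obtained from Theorem~\ref{mainTheorem} at $m=2$ (respectively $m=3$ for \eqref{N1_joint_H1_estimate} and \eqref{P1_base_H1_estimate}) by noting that the cut-offs trivialize on the given subdomain and then bounding the leftover explicit terms via the volume/rescaling and exponential-decay arguments, with the one-dimensional statements following from the cross-sectional Cauchy--Bunyakovsky--Schwarz inequality and the embedding $H^1(I)\hookrightarrow C(\overline{I})$. Your accounting of where the extra factors $\varepsilon^{1/2}$ and $\delta^{1/2}$ come from matches (and in places makes more explicit) the computation in the paper's proof.
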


\begin{proof}
Using the smoothness of the functions $\{w_k^{(i)}\}$ and the exponential decay of  the functions $\{N_k~-~w_k^{(i)}(0)\}$ and
$\{\Pi_{k}^{(i)}\}, \ i \in \{1, 2, 3\},$
at infinity, we deduce the inequality~\eqref{aa1_H1_estimate} from  estimate~\eqref{main_estimate} at $m = 2$:
\begin{gather*}
    \left\| \,
        u_\varepsilon - U^{(1)}_{\varepsilon}
    \right\|_{H^1(\Omega_\varepsilon)}
    \le
    \left\| \,
        u_\varepsilon - U^{(2)}_{\varepsilon}
    \right\|_{H^1(\Omega_\varepsilon)}
\\
    + \,
    \Bigg\| \,
    \varepsilon^2
    \left(
        \sum_{i=1}^3
        \Big(
            \chi_{\delta, 0}^{(i)} \,(w_{2}^{(i)} + u_{2}^{(i)})
        \Big)
      + \bigg( 1 - \sum_{i=1}^3 \chi_{\delta, 0}^{(i)} \, \bigg) {N}_{2}
      + \chi_{\delta}^{(3)} \Pi_{2}^{(3)}
    \right)
    \Bigg\|_{H^1(\Omega_\varepsilon)}
\\
    \le
    C_2 \, \varepsilon^{2}
    +
    \varepsilon^2
    \sum\limits_{i=1}^3
    \bigg\| \,
        \left(
            \chi_{\delta, 0}^{(i)} (w_{2}^{(i)} + u_{2}^{(i)})
          + \big( 1 - \chi_{\delta, 0}^{(i)} \big) N_2
        \right)
    \bigg\|_{H^1(\Omega^{(i)}_\varepsilon)}
\\
    + \,
    \varepsilon^2
    \left\| N_2 \right\|_{H^1(\Omega_\varepsilon^{(0)})}
  + \varepsilon^2
    \left\| \,
        \chi_\delta^{(3)} \Pi_2^{(3)}
    \right\|_{H^1(\Omega_\varepsilon^{(i)})}
\\
    \le
    C_2 \, \varepsilon^{2}
    +
    \varepsilon^2
    \sum_{i=1}^3
    \left\| \,
        w_2^{(i)} + u_2^{(i)}
    \right\|_{H^1(\Omega_\varepsilon^{(i)})}
    +
    \varepsilon^2
    \sum_{i=1}^3
    \left\| \,
        \big(1 - \chi_{\delta, 0}^{(i)}\big) \,
        ( N_2 - w_2^{(i)}(0) )
    \right\|_{H^1(\Omega_\varepsilon^{(i)})}
\\
    + \,
    \varepsilon^\frac52
    \left\| N_2 \right\|_{H^1(\Xi^{(0)})}
    +
    \varepsilon^2
    \left\| \,
        \chi_\delta^{(3)} \Pi_2^{(3)}
    \right\|_{H^1(\Omega_\varepsilon^{(3)})}
    \le
    \widetilde{C}_0 \, \varepsilon^2.
\end{gather*}
The estimate~\eqref{aa0_L2_estimate} can be proven similarly with the help of~\eqref{main_estimate} at $m = 2$ and the $L^2$-norm of corresponding terms.

Again with the help of estimate~\eqref{main_estimate} at $m=2,$ we deduce
\begin{equation*}
    \left\| \,
        u_\varepsilon
        -
        w_0^{(i)}  - \varepsilon w_1^{(i)} - \varepsilon u_1^{(i)}
    \right\|_{H^1(\Omega_{\varepsilon,\delta}^{(i)})}
    \leq
    \left\| \,
        u_\varepsilon - U^{(2)}_{\varepsilon}
    \right\|_{H^1(\Omega_\varepsilon)}
    +
    \varepsilon^2
    \left\| \,
        w_{2}^{(i)} + u_{2}^{(i)}
    \right\|_{H^1(\Omega_{\varepsilon,\delta}^{(i)})}
    \leq
    \widetilde{C}_1 \, \varepsilon^2,
\end{equation*}
whence we get~\eqref{w1_cylinder_H1_estimate}.

Using the Cauchy-Buniakovskii-Schwarz inequality and~\eqref{w1_cylinder_H1_estimate}, we obtain inequalities~\eqref{w1_segment_H1_estimate}.
Since the space $H^1(I_{\varepsilon, \delta}^{(i)})$ continuously embedded in
$C(\overline{I}_{\varepsilon, \delta}^{(i)}),$
from~\eqref{w1_segment_H1_estimate} it follows inequalities~\eqref{w1_segment_C_estimate}.

From inequalities
\begin{equation*}
    \left\| \,
        u_\varepsilon - N_0 - \varepsilon N_1
    \right\|_{H^1(\Omega^{(0)}_{\varepsilon, \delta})}
    \leq
    \left\| \,
        u_\varepsilon - U^{(3)}_{\varepsilon}
    \right\|_{H^1(\Omega_\varepsilon)}
    +
    \varepsilon^2 \,
    \| N_2 \|_{H^1(\Omega^{(0)}_{\varepsilon, \delta})}
    +
    \varepsilon^3 \,
    \| N_3 \|_{H^1(\Omega^{(0)}_{\varepsilon, \delta})}
    \leq
    \widetilde{C}_3 \, \varepsilon^{\frac52},
\end{equation*}
\begin{gather*}
    \left\| \,
        u_\varepsilon - w_0^{(3)} - \Pi_0^{(3)}
        -
        \varepsilon w_1^{(3)} - \varepsilon u_1^{(3)} - \varepsilon \Pi_1^{(3)}
    \right\|_{H^1(\Omega_{\varepsilon,\delta,\ell_3}^{(3)})}
    \leq
    \left\| \,
        u_\varepsilon - U^{(3)}_{\varepsilon}
    \right\|_{H^1(\Omega_\varepsilon)}
\\
    +
    \,\varepsilon^2
    \left\| \,
        w_{2}^{(3)} + u_{2}^{(3)} + \Pi_{2}^{(3)}
    \right\|_{H^1(\Omega_{\varepsilon,\delta,\ell_3}^{(3)})}
    +
    \,\varepsilon^3
    \left\| \,
        w_{3}^{(3)} + u_{3}^{(3)} + \Pi_{3}^{(3)}
    \right\|_{H^1(\Omega_{\varepsilon,\delta,\ell_3}^{(3)})}
    \leq
    \widetilde{C}_3 \, \varepsilon^{2} \delta^\frac12
\end{gather*}
it follows more better energetic estimates~\eqref{N1_joint_H1_estimate}, \eqref{P1_base_H1_estimate} in a neighbourhood of the joint $\Omega^{(0)}_\varepsilon$ and in a neighbourhood of the base $\Upsilon_{\varepsilon}^{(3)} (\ell_3).$
\end{proof}

\section{Conclusion}\label{conclusion}

{\bf 1.}
In this paper, we have presented asymptotic analysis of  transport phenomena in non-homogeneous environments that can fill thin cracks, channels and junctions between them. In Sec.~\ref{Sec:expansions}, formal asymptotic expansions are constructed under general classical assumptions for the velocity field $\overrightarrow{V_\varepsilon}.$ Here we wanted also to study the influence of surface transport phenomena on the lateral  surfaces of the thin cylinders, in which the flows of both the convective and diffusion parts are involved. This influence is taken into account through the terms $\{u^{(i)}_k\}$ of the regular asymptotics, which are the solutions of the problems \eqref{problem_u_k}, respectively. The impact also manifests itself in the limit problem \eqref{limit_prob} through the given functions $\{\varphi^{(i)}\}$ (the right-hand sides in the Neumann type conditions in the problem \eqref{probl}).

The local geometric irregularity of the node and physical processes inside do not affect the view of the limit problem. However, these ones are taken into account by the terms of  the inner asymptotics, which are solutions to the problems \eqref{N_k_prob}.

Due to a small diffusion the limit problem \eqref{limit_prob} consists of three first order differential equations on the graph $\mathcal{I}:= \bigcup_{i=1}^3 I_i$ with the special Kirchhoff condition at the vertex and with only two boundary conditions at the corresponding ends of this graph. This is in complete agreement with asymptotic theory of boundary value problems with a small parameter at the highest derivatives (in the limit, such problems turn into problems of a lower order). As simple examples show (see e.g. \cite[\S 1.1]{Stynes-2018}, where one-dimensional convection-dominated problem was considered) such solutions
have boundary layers near some parts of the domain boundary. In our case it is the base of the thin cylinder $\Omega_\varepsilon^{(3)}.$ Therefore, we introduce the boundary-layer expansion (see Subsec.~\ref{boundary-layer}) in a neighborhood of the base $\Upsilon_{\varepsilon}^{(3)} (\ell_3)$ in order to satisfy the  boundary condition  and to neutralize the discrepancies from the regular part of the asymptotics.

\medskip

{\bf 2.}
An important task of each proposed asymptotic approach is its stability and accuracy. This is provided by proving a priori estimates for solutions. In addition, such estimates help to obtain the error estimate between the constructed approximation and the exact solution and hence to analyse effectiveness of the proposed asymptotic method (or numerical one).

Proving such estimates for convection-dominated problems can demand additional restrictions on coefficients of the equation (see e.g.
\cite[Lemma 4.14]{Stynes-2018}, where an a priori estimate for two-dimensional problem with the Dirichlet boundary condition in the weighted energy norm was proved). For our problem we additionally imposed conditions~\eqref{assum_1}--\eqref{potential} for the velocity field $\overrightarrow{V_\varepsilon}.$
The conditions \eqref{assum_1} are restrictions on the components of $\overrightarrow{V_\varepsilon}$ in the axial directions of the thin cylinders and mean that the convective terms really dominate significantly in these  directions.
At the same time, it follows from \eqref{str_1} - \eqref{st_3} that the diffusion terms and convection ones are the same order  $\mathcal{O}(\varepsilon)$ in the perpendicular direction to the lateral surfaces of thin cylinders, and
the conditions \eqref{assum_2} mean that diffusion processes prevail in this direction. In the node $\Omega_\varepsilon^{(0)}$ the velocity field $\overrightarrow{V_\varepsilon}$ is conservative and incompressible and \eqref{cond_1} signifies that the flow entering the disk $\Upsilon_\varepsilon^{(1)} (\varepsilon\ell_0)$   of the node boundary from the first thin cylinder is equal to the sum of flows outgoing from the second and third disks into the other thin cylinders.

\medskip

{\bf 3.} The results obtained in Theorem~\ref{mainTheorem} and Corollary~\ref{Corollary} show that
convection-diffusion problems on graphs do not correctly model real processes in thin graph-like junctions, because they do not take into account local geometric heterogeneities of nodes and physical processes inside, and the behaviour of solutions near  bases of some thin cylinders. It means that  the perturbed boundary-value problem \eqref{probl}  cannot be replaced by the corresponding 1-dimensional limit  problem \eqref{limit_prob} on the graph from physical point of view.

The estimate \eqref{aa0_L2_estimate} shows that the zero approximation
$$
U_\varepsilon^{(0)}(x) = \overline{u}_0(x, \,\varepsilon) + \overline{N}_0 (x, \,\varepsilon) + \overline{\Pi}_0(x, \,\varepsilon), \quad x \in \Omega_\varepsilon,
$$
contains terms from both the internal  and  boundary-layer asymptotics. Thus, the main our conclusion is as follows:

\emph{In parallel with the limit problem,  the problems  for the first terms $N_0$ and $\Pi_0^{(3)}$
of the inner and boundary-layer  asymptotics should be necessarily considered.
}

\medskip

{\bf 4.} For convection-diffusion problems it is very important to know the behaviour of the solution gradient.
Thanks to estimates~\eqref{w1_cylinder_H1_estimate}, \eqref{P1_base_H1_estimate} and~\eqref{N1_joint_H1_estimate}, we get the zero-order approximation of the gradient (flux) of the solution
$$
\nabla u_\varepsilon(x)
\sim
\bigg(
    \frac{\partial w^{(i)}_0}{\partial x_i}(x_i)
    +
    \varepsilon \frac{\partial w^{(i)}_1}{\partial x_i}(x_i)
    +
    \varepsilon \frac{\partial u^{(i)}_1}{\partial x_i}
    \Big(x_i, \frac{\overline{x}_i}{\varepsilon}\Big), \
    \nabla_{\bar{\xi}_i} u^{(i)}_1(x_i, \bar{\xi}_i)
    |_{\bar{\xi}_i=\frac{\overline{x}_i}{\varepsilon}}
\bigg)
\quad \text{as}\quad \varepsilon \to 0
$$
in each cylinder~$\Omega_{\varepsilon, \delta}^{(i)},$ $i\in \{1,2,3\},$
$$
\nabla u_\varepsilon(x)
\sim
\varepsilon^{-1} \nabla_{\xi}{N}_{0}(\xi)\Big|_{\xi=\frac{x}{\varepsilon}}  + \nabla_{\xi}{N}_{1}(\xi)\Big|_{\xi=\frac{x}{\varepsilon}}
\quad \text{as}\quad \varepsilon \to 0
$$
in the neighbourhood $\Omega^{(0)}_{\varepsilon, \delta}$ of the node, and
\begin{gather*}
\nabla u_\varepsilon(x)
\sim
\Bigg(
    \frac{\partial w^{(3)}_0}{\partial x_3}(x_3)
    -
    \varepsilon^{-1}
    \frac{\partial \Pi_0^{(3)}}{\partial \xi_3}
    \Big(
        \frac{x_1}{\varepsilon}, \frac{x_2}{\varepsilon}, \xi_3
    \Big)\Big|_{\xi_3 = \frac{\ell_3 - x_3}{\varepsilon}}
    +
\\
    \varepsilon \frac{\partial w^{(3)}_1}{\partial x_i}(x_i)
    +
    \varepsilon \frac{\partial u^{(3)}_1}{\partial x_i}
    \Big(x_3, \frac{\overline{x}_3}{\varepsilon}\Big)
    -
    \frac{\partial \Pi_1^{(3)}}{\partial \xi_3}
    \Big(
        \frac{x_1}{\varepsilon}, \frac{x_2}{\varepsilon}, \xi_3
    \Big)\Big|_{\xi_3 = \frac{\ell_3 - x_3}{\varepsilon}},
\\
    \varepsilon^{-1}
    \nabla_{\bar{\xi}_3} \Pi_0^{(3)}
    \Big(
        \frac{\ell_3 - x_3}{\varepsilon}, \bar{\xi}_3
    \Big)\Big|_{\bar{\xi}_3=\frac{\overline{x}_3}{\varepsilon}}
    +
    \nabla_{\bar{\xi}_3} u^{(3)}_1(x_3, \bar{\xi}_3)
    \Big|_{\bar{\xi}_3=\frac{\overline{x}_3}{\varepsilon}}
    +
    \nabla_{\bar{\xi}_3} \Pi_1^{(3)}
    \Big(
        \frac{\ell_3 - x_3}{\varepsilon}, \bar{\xi}_3
    \Big)\Big|_{\bar{\xi}_3=\frac{\overline{x}_3}{\varepsilon}}
\Bigg)
\quad \text{as}\quad \varepsilon \to 0
\end{gather*}
in the neighbourhood $\Omega^{(3)}_{\varepsilon, \delta, \ell_3}$ of the base $\Upsilon_{\varepsilon}^{(3)} (\ell_3)$.

Also, finding the concentration of a substance at each point is especially important for applied problems;
just the uniform pointwise estimates ~\eqref{w1_segment_C_estimate}   show how to find it without knowing the exact solution.

\medskip

{\bf 5.} The method proposed in the present paper can be used for the asymptotic investigation of convection-diffusion problems
in thin graph-like junctions with more complex structures (e.g., variable thickness of thin curvilinear
cylinders, perforated thin cylinders).  It would also be very interesting to consider the case when the velocity field $\overrightarrow{V_\varepsilon}$ is not conservative, or incompressible in the node.

\section*{Acknowledgements}
The first author is very grateful to Professor Christian Rohde for bringing the author attention to convection-diffusion problems in thin graph-like junctions, as well as for the hospitality and excellent working conditions at  Stuttgart University, where this study was started in January 2020  under the support of  the SFB 1313 center of the University of Stuttgart.

This work has been supported by the Grant of the Ministry of Education and Science of Ukraine for
perspective development of a scientific direction "Mathematical sciences and natural sciences" at Taras Shevchenko National University
of Kyiv.

\end{document}